\renewcommand\footnotetextcopyrightpermission[1]{} 
\newcommand{\R}{\mathbb{R}}
\def\ket#1{\mathinner{|{#1}\rangle}}
\renewcommand{\part}[2]{\frac{\partial #1}{\partial #2}}
\newcommand{\all}[2]{\begin{align}\label{#2} #1\end{align}}
\newcommand{\poly}[1]{O(\text{poly} (#1))}
\newcommand{\xnext}{\vec{x}_\text{next}}
\newcommand{\snext}{\vec{s}_\text{next}}
\newcommand{\norm}[1]{\left\lVert#1\right\rVert}
\newcommand{\dx}{\Delta\vec{x}}
\newcommand{\dy}{\Delta\vec{y}}
\newcommand{\ds}{\Delta\vec{s}}
\newcommand{\dxm}{\measured{\dx}}
\newcommand{\dsm}{\measured{\ds}}
\newcommand{\Arw}{\operatorname{Arw}}
\newcommand{\interior}{\operatorname{int}}
\newcommand{\lorentz}{\mathcal{L}}
\renewcommand{\vec}[1]{\bm{#1}}
\newcommand{\vecrest}[1]{\widetilde{\vec{#1}}}
\newcommand{\measured}[1]{\overline{#1}}
\newcommand{\polylog}{\operatorname{polylog}}
\newcommand{\thmref}[1]{\hyperref[#1]{{Theorem~\ref*{#1}}}}
\newcommand{\lemref}[1]{\hyperref[#1]{{Lemma~\ref*{#1}}}}
\newcommand{\remref}[1]{\hyperref[#1]{{Remark~\ref*{#1}}}}
\newcommand{\corref}[1]{\hyperref[#1]{{Corollary~\ref*{#1}}}}
\newcommand{\eqnref}[1]{\hyperref[#1]{{Equation~(\ref*{#1})}}}
\newcommand{\claimref}[1]{\hyperref[#1]{{Claim~\ref*{#1}}}}
\newcommand{\remarkref}[1]{\hyperref[#1]{{Remark~\ref*{#1}}}}
\newcommand{\propref}[1]{\hyperref[#1]{{Proposition~\ref*{#1}}}}
\newcommand{\factref}[1]{\hyperref[#1]{{Fact~\ref*{#1}}}}
\newcommand{\defref}[1]{\hyperref[#1]{{Definition~\ref*{#1}}}}
\newcommand{\exampleref}[1]{\hyperref[#1]{{Example~\ref*{#1}}}}
\newcommand{\hypref}[1]{\hyperref[#1]{{Hypothesis~\ref*{#1}}}}
\newcommand{\secref}[1]{\hyperref[#1]{{Section~\ref*{#1}}}}
\newcommand{\chapref}[1]{\hyperref[#1]{{Chapter~\ref*{#1}}}}
\newcommand{\apref}[1]{\hyperref[#1]{{Appendix~\ref*{#1}}}}
\newcommand{\C}{\mathbb{C}}
	\providecommand\BibTeX{{%
			\normalfont B\kern-0.5em{\scshape i\kern-0.25em b}\kern-0.8em\TeX}}}
\begin{document}
	\title{Quantum Algorithms for Portfolio Optimization}
	
	\author{Iordanis Kerenidis}
	\email{jkeren@irif.fr}
	\affiliation{
		CNRS, IRIF, Universit\'e Paris Diderot, Paris, France
	}
	
	\author{Anupam Prakash}
	\email{anupam@irif.fr}
	\affiliation{
		CNRS, IRIF, Universit\'e Paris Diderot, Paris, France
	}
	
	\author{Dániel Szilágyi}
	\email{dszilagyi@irif.fr}
	\affiliation{
		CNRS, IRIF, Universit\'e Paris Diderot, Paris, France
	}
	
	\begin{abstract}
We develop the first quantum algorithm for the constrained portfolio optimization problem. The algorithm has running time\\ $\widetilde{O} \left( n\sqrt{r} \frac{\zeta \kappa}{\delta^2} \log \left(1/\epsilon\right) \right)$, where $r$ is the number of positivity and budget constraints, $n$ is the number of assets in the portfolio, $\epsilon$ the desired precision, and $\delta, \kappa, \zeta$ are problem-dependent parameters related to the well-conditioning of the intermediate solutions. If only a moderately accurate solution is required, our quantum algorithm can achieve a polynomial speedup over the best classical algorithms with complexity $\widetilde{O} \left( \sqrt{r}n^\omega\log(1/\epsilon) \right)$, where $\omega$ is the matrix multiplication exponent that has a theoretical value of around $2.373$, but is closer to $3$ in practice. 

We also provide some experiments to bound the problem-dependent factors arising in the running time of the quantum algorithm,  and these experiments suggest that for most instances the quantum algorithm can potentially achieve an $O(n)$ speedup over its classical counterpart.
\end{abstract}

	\keywords{portfolio optimization, quantum algorithms, second order cone programs, quantum optimization.}

	\maketitle
	
	\section{Introduction}
Quantum computation offers significant (even exponential) computational speedups over classical computation for a wide variety of problems \cite{S97, G96, HHL09}. 
The expected availability of small scale quantum computers in the near future has spurred interest in the development of applications for quantum computers that attain provable speedups over classical algorithms. 
Many such applications have been proposed in the field of quantum machine learning, including applications to principal components analysis \cite{LMR13}, clustering \cite{LMR13, KLLP18}, classification \cite{L19, KL18}, least squares regression \cite{KP17, CGJ18} and recommendation systems \cite{KP16}. Most of the quantum machine machine learning applications with provable speedups rely on quantum algorithms for linear algebra, utilizing some variant of the HHL algorithm \cite{HHL09} to obtain significant quantum speedups. 

Mathematical finance is an application area where quantum computers could potentially offer groundbreaking speedups. 
This is a very recent research area for quantum algorithms and is important in terms of applications as even modest speedups for computational financial problems can have enormous real world impact. Of course, translating these theoretical advantages of quantum algorithms into real world applications necesitates both much more advanced hardware, which may take some more years to come, but also a close collaboration between the communities of quantum algorithms and of mathematical Finance in order to really understand where and how such quantum algorithms can become a new powerful tool to be used within the general framework of mathematical finance. 

It has been suggested that quantum techniques like Feynman integrals could be useful for option pricing \cite{ B07}. 
There has also been experimental work where the IBM quantum computers have been used to explore quadratic speedups for option pricing \cite{S19, M19} and work on quadratic speedups for option pricing using Monte Carlo methods \cite{RGB18}. While some of these results lack provable guarantees, they indicate the strong interest in both the quantum algorithms and mathematical Finance communities in developing applications of quantum computers to computational finance. 

Very recently, Lloyd and Rebentrost \cite{LR18} proposed a quantum algorithm for the unconstrained portfolio optimization problem. Their algorithm uses quantum linear system solvers to obtain speedups for portfolio optimization problems that can be reduced to unconstrained quadratic programs, which in turn are reducible to a single linear system. 
The main limitation of their algorithm is that it can not incorporate positivity or budget constraints, thus restricting its applicability to real world problems that can have complex budget constraints. The reason for this limitation is algorithmic, the constrained portfolio optimization problem is known to be equivalent to quadratic programming (QP), a class of optimization problems that is more general than Linear programming (LP). 
Mathematical optimization problems, for example linear, quadratic and semidefinite programs (SDP) are not known to be reducible to a single instance of linear systems. All known algorithms for these optimization problems need to perform computations over a number of steps depending on the number of constraints. 

In this work, we design and analyse a quantum algorithm for the general constrained portfolio optimization problem. 
Our algorithm is applicable to portfolio optimization with an arbitrary number of positivity as well as budget constraints. We obtain a polynomial speedup over the classical algorithms and we provide experimental results to demonstrate the potential of these advantages in practice.

The main idea of our work is based on quantum algorithms for convex optimization.
In recent years, there has been work on quantum SDP (semidefinite program) solvers by quantizing the classical multiplicative weights method \cite{AG18, B+17} and the interior point methods \cite{KP18}. SDPs are more general than QPs so one may expect to obtain quantum speedups by applying some of these methods for QPs. However, the running time of all these quantum algorithms depends on a number of problem specific parameters and they do not achieve a worst case speedup over classical algorithms. Further, classical QP solvers are much more efficient than SDP solvers and there are special purpose QP algorithms that have complexity close to classical LP solvers. 

Our algorithm for constrained portfolio optimization first reduces the problem to second order cone programs (SOCP), a class of optimization problems that generalizes both LPs and QPs and has complexity close to LPs in the classical setting. We then use a recently developed quantum interior point method for SOCPs \cite{R19}, that extends the results obtained for LPs and SDPs \cite{KP18}. 

\section{Portfolio Optimization} 

The theory of portfolio optimization in mathematical finance was developed by Markovitz \cite{M52}. The theory describes how wealth can be optimally invested in assets which differ 
in expected return and risk. 

The input for the portfolio optimization problem is data about the historical prices of certain financial assets, the goal is to assemble the optimal portfolio that maximizes the expected return for a given level of risk. 
Let us assume that there are $m$ assets and we have data for historical returns on investment for $T$ time epochs for each asset. Let $R(t) \in \R^{m}$ be the vector of returns for all assets for time epoch $t$. The expected reward and risk for the assets can be estimated from the data as follows,  
\all{ 
\mu &= \frac{1}{T} \sum_{t \in [T]} R(t) \notag \\
\Sigma &= \frac{1}{T-1}  \sum_{t \in [T]} ( R (t) - \mu )   ( R (t) - \mu )^{T}
}{eq:one}
A portfolio is specified by the total investment $x_{j}$ in asset $j$ for all assets $j \in [m]$. The expected reward and risk for the portfolio $x$ are respectively given by $\mu^{T} x$ and $x^{T} \Sigma x$. 

The portfolio optimization can include positivity and budget constraints. Positivity constraints $x_{j} >0$ corresponds to the situation where it is not possible to short sell asset $j$. Budget constraints limit the amount of money invested in a subset of the assets and can depend on the value of the assets and also on the size of the initial investment. Similarly, one can add constraints to ensure that the portfolio is diversified by adding constraints on the amount of investment in certain asset classes. The constrained portfolio optimization problem can therefore capture a variety of complex constraints that arise in real world portfolio selection problems. 

In the quantum setting, we do not estimate the covariance matrix as in equation \eqref{eq:one}, we instead store the square root of the covariance $\Sigma$ to which we have direct access from the data, i.e. the covariance matrix $\Sigma = MM^{T}$ where $M$ is the matrix with columns $\frac{1}{ \sqrt{T-1}} (R(t) - \mu)$. Given the data it is easy to construct quantum data structures for operating with $M$ \cite{KP16}, hence we formulate the portfolio optimization problem in terms of $M$. 

The constrained portfolio optimization problem can be written as an optimization problem in one of several equivalent ways \cite{CT06}. We use the following formulation here:
\begin{equation}
\begin{array}{ll}
\min & \vec{x}^T M^{T}M \vec{x} \\
\text{s.t.}& \vec{\mu}^T \vec{x} = R  \\
& A\vec{x} = \vec{b} \\
& x \geq 0.
\end{array} \label{prob:portfolio}
\end{equation}
Note that for the above formulation, $\vec{x}$ is the portfolio, $\vec{\mu}$ is the vector of mean asset prices, $M$ is a matrix such that the covariance matrix of the asset prices is defined as $\Sigma= M^{T} M$, and $A\vec{x}=\vec{b}$ and $\vec{x} \geq 0$ are the constraints. An inequality constraint $C\vec{x} \geq \vec{d}$ can be realized by introducing a slack variable $\vec{s} := C\vec{x} - \vec{d}$ and requiring $\vec{s} \geq 0$. 

For the case when there are no inequality constraints, the problem becomes a linear least-squares problem, for which there is a closed-form solution \cite{LR18}. However, there is no closed form solution for the general constrained portfolio optimization problem. 

\section{Reducing portfolio optimization to SOCP} 
The portfolio optimization problem is typically reduced to quadratic programs (QP). We instead reduce it SOCPs (Second Order Cone Programs) which are a more general family of optimization problems. There are two reasons for using this reduction to SOCP besides the greater generality. First, the classical interior point algorithms for the SOCP is particularly well suited for being quantized, that is the linear systems generated for these methods are easily solved using quantum computers. Second, the reduction to SOCPs allows us to use powerful mathematical tools from the theory of Euclidean Jordan algebras for the analysis of our algorithm. In this section we provide the reduction of the constrained portfolio optimization problem to the specific form of SOCPs that enables the efficient quantum algorithm. 

The SOCP in the standard form is an optimization problem over the products of Lorentz or second order cones. 
\begin{definition} 
The $n$-dimensional Lorentz cone, for $n\geq 0$ is defined as
	\begin{equation*}
		\lorentz^n := \{ \vec{x} = (x_0; \vecrest{x}) \in \R^{n+1} \;|\; \norm{\vecrest{x}} \leq x_0 \}.
	\end{equation*}
\end{definition} 

 In other words the elements of the $n$-dimensional Lorentz cone are $n+1$ dimensional vectors where the first coordinate is an upper bound on the $\ell_2$-norm of the remainnig $n$-dimensional vector.

\noindent Formally, the SOCP in the canonical form is the following optimization problem: 
	\begin{equation*}
		\begin{array}{ll}
		\min & \sum_{i=1}^r \vec{c}_i^T \vec{x}_i\\
		\text{s.t.}& \sum_{i=1}^r A^{(i)} \vec{x}_i = \vec{b} \\
		& \vec{x}_i \in \lorentz^{n_i}, \forall i \in [r].
		\end{array}
	\end{equation*}
There are two important parameters in the definition of the SOCP, the first is the rank $r$ that is equal to the number of Lorentz cones in the product that we are optimizing over, the second $n = \sum_{i \in [r]} n_{i}$ is the number of variables. The running time for SOCP algorithms will be given in terms of $n$ and $r$. 
	
Concatenating the vectors $(\vec{x}_1; \dots; \vec{x}_r) =: \vec{x}$, $(\vec{c}_1; \dots; \vec{c}_r) =: \vec{c}$, and matrices $[A^{(1)} \cdots A^{(r)}] =: A$ horizontally, we can write the SOCP more compactly as an optimization problem over the cone $\lorentz := \lorentz^{n_1} \times \cdots \times \lorentz^{n_r}$ in the following way: 
	\begin{equation}
	\begin{array}{ll}
	\min & \vec{c}^T \vec{x}\\
	\text{s.t.}& A \vec{x} = \vec{b} \\
	& \vec{x} \in \lorentz,
	\end{array} \label{prob:SOCP primal}
	\end{equation}
	
We refer to this as the {\em primal} form of the SOCP and we also define the {\em dual} form as
		\begin{equation}
		\begin{array}{ll}
		\max & \vec{b}^T \vec{y}\\
		\text{s.t.}& A^T \vec{y} + \vec{s} = \vec{c}\\
		& \vec{s} \in \lorentz.
		\end{array} \label{prob:SOCP dual}
		\end{equation}
The problem in equation \eqref{prob:SOCP primal} is the SOCP \emph{primal}, and the one in \eqref{prob:SOCP dual} is its dual. A solution $(\vec{x}, \vec{y}, \vec{s})$ satisfying the constraints of both \eqref{prob:SOCP primal} and \eqref{prob:SOCP dual} is \emph{feasible}, and if in addition it satisfies $\vec{x} \in \interior \lorentz$ and $\vec{s} \in \interior \lorentz$, it is \emph{strictly feasible}. We assume that there exists a strictly feasible solution, for some cases there are methods for reducing a feasible problem to a strictly feasible one \cite{boyd2004convex}.
	
The constrained portfolio optimization problem can be reduced to an SOCP by using the matrix $M$ we defined earlier. First off, we see that $\vec{x}^T \Sigma \vec{x} = \norm{M\vec{x}}^2$, however, minimizing the squared norm is equivalent to minimizing the norm itself which turns out to be more naturally expressible using Lorentz constraints. We introduce an additional vector $\vec{t}:= (t_{0}, \vecrest{t})$ such that $\vecrest{t} = M\vec{x}$. Using this variable the portfolio optimization problem in equation \eqref{eq:one} is easily seen to be equivalent to the following SOCP in the canonical form, 
\begin{equation}
\begin{array}{ll}
\min & (1; 0^{n+m})^T (\vec{t}; \vec{x}) \\
\text{s.t.}&  \begin{bmatrix}
	0^m & -I_m & M \\
	0 & (0^m)^T & \mu^T \\
	0 & (0^m)^T  & A 
\end{bmatrix} \begin{bmatrix}
\vec{t} \\
\vec{x}
\end{bmatrix} = \begin{bmatrix}
0^m \\
R \\
b
\end{bmatrix} \\
& \vec{t} \in \lorentz^m, x_i \in \lorentz^0 \;\forall \; i \in [n],
\end{array} \label{prob:experimental portfolio SOCP}
\end{equation}
The optimal solution lies on the boundary of the cones and thus will have $t_0 = \norm{\vecrest{t}} = \norm{P\vec{x}}$, the SOCP objective function therefore also optimizes the objective function for the portfolio optimization problem \eqref{eq:one}. The remaining SOCP 
constraints respectively enforce the constraints $\vecrest{t} = M\vec{x}$, $\mu^{T} \vec{x} =R$ and $A\vec{x}=b$. The positivity constraints $\vec{x} \geq 0$ are ensured by the second order constraints $x_i \in \lorentz^0$. 

\section{The short step Interior-Point Method for SOCP} 
The quantum portfolio optimization problem is obtained by quantizing the classical short-step interior point method. In this section we describe the 
classical algorithm. The following definition of an arrowhead matrix will be useful for compactly stating the classical short-step interior 
point method. 

\begin{definition} 
(Arrowhead matrix.) For every vector $\vec{x}$, we can define the matrix (or \emph{linear}) representation of $\vec{x}$, $\Arw(\vec{x})$ as 
\[
	\Arw(\vec{x}) := \begin{bmatrix}
	x_0 & \vecrest{x}^T \\
	\vecrest{x} & x_0 I_n
	\end{bmatrix}.
\] 
\end{definition} 
In the classical short-step IPM \cite{MT00}, we start with a strictly feasible solution solution $(\vec{x}, \vec{y}, \vec{s})$ and update it in each step by solving a Newton linear system of size $O(n)$ that depends on the input parameters as well as the current duality gap $\nu = \frac{1}{r} \vec{x^{T}}\vec{s}$ and the current solution. Let $\sigma = (1 - 0.1/\sqrt{r})$, then the Newton linear system that is solved at each step of the interior point method is given by
\begin{align}
\begin{bmatrix}
A & 0 & 0 \\
0 & A^T & I \\
\Arw(\vec{s}) & 0 & \Arw(\vec{x}) 
\end{bmatrix}
\begin{bmatrix}
\dx \\
\dy \\
\ds
\end{bmatrix} = 
\begin{bmatrix}
\vec{b} - A \vec{x} \\
\vec{c} - \vec{s} - A^T \vec{y} \\
\sigma \nu \vec{e} - \Arw(\vec{x}) \vec{s}
\end{bmatrix}.
\label{eq:Newton system}
\end{align}
The solutions to the Newton linear system are used to update our current solution $(\vec{x}, \vec{y}, \vec{s})$ to $\vec{x} \gets \vec{x} + \dx$, $\vec{y} \gets \vec{y} + \dy$, $\vec{s} \gets \vec{s} + \ds$, that is guaranteed to have a duality gap that is smaller by a multiplicative factor of $\sigma$. The classical interior point method for SOCP is given as Algorithm \ref{alg:ipm}.
	
	\begin{algorithm}
		\caption{The interior point method for SOCPs } \label{alg:ipm} 
		\begin{algorithmic}[1]
			\REQUIRE Matrix $A$ and vectors $\vec{b}, \vec{c}$ in memory, precision $\epsilon>0$.  \\
			\begin{enumerate} 
				\item Find feasible initial point $(\vec{x}, \vec{y}, \vec{s}, \nu):=(\vec{x}, \vec{y}, \vec{s}, \nu_0)$.
				\item Repeat the following steps for $O(\sqrt{r} \log(\nu_0 / \epsilon))$ iterations. 
				\begin{enumerate}
					\item Solve the Newton system \eqref{eq:Newton system} to get $\dx, \dy, \ds$.
					\item Update $\vec{x} \gets \vec{x} + \dx$, $\vec{y} \gets \vec{y} + \dy$, $\vec{s} \gets \vec{s} + \ds$ and $\nu = \frac1r\vec{x}^T \vec{s}$. 
				\end{enumerate} 
				\item Output $(\vec{x}, \vec{y}, \vec{s})$.
			\end{enumerate}
		\end{algorithmic}
	\end{algorithm}

It can be shown that this algorithm ensures that the solutions remain strictly feasible and halves the duality gap every $O(\sqrt{r})$ iterations, so indeed, after $O(\sqrt{r} \log(\nu_0 / \epsilon))$ it will converge to a (feasible) solution with duality gap at most $\epsilon$ \cite{MT00}.
	
The time complexity for this algorithm is $O(\sqrt{r} n^{\omega} \log(\nu_0 / \epsilon))$ where $\omega < 2.37$ is the exponent for matrix multiplication. In practice the running time for the constrained portfolio optimization problem is $O(n^{3.5} \log(\nu_0 / \epsilon))$ as $r =O(n)$ if positivity constraints are included and the algorithms for matrix inversion requires time $O(n^{3})$ in practice. 
	
In practice, large scale portfolio optimization problems are solved using commercial \cite{andersen2000mosek} and open-source solvers \cite{borchers1999csdp, DCB13, T03} using interior-point methods (IPM). The complexity of the short-step IPM is dominated by the cost of solving linear systems. Thus, if we could improve the time needed to solve a linear system, we would also improve the complexity of our IPM by the same factor.
	
Quantum computation offers exactly such a speedup for solving linear systems with sublinear algorithms for 'solving' linear systems. Starting with the work of \cite{HHL09}, it has become possible to solve a (well-conditioned) linear system in time that can be polylogarithmic in the matrix dimensions. This basic technique has been improved significantly, as we will discuss in the following section. 
	
Of course, when solving $A\vec{x} = \vec{b}$, $\vec{x} \in \R^n$, it is not even possible to write down all $n$ coordinates of the solution vector $\vec{x}$ in time $o(n)$. Instead, these algorithms encode vectors as quantum states, so that $\vec{z} \in \R^n$ with $\norm{\vec{z}}=1$ is encoded as the quantum state
\begin{equation}\label{eq:quantum vector notation}
\ket{\vec{z}} = \sum_{i=1}^{n} z_i \ket{i},
\end{equation}
where we write $\ket{i}$ for the joint $\left\lceil \log_2(n) \right\rceil$-qubit state corresponding to $\left\lceil \log_2(n) \right\rceil$-bit binary expansion of $i$. 
	
Then, the solution they output is a quantum state $\ket{\phi}$ close to $\ket{A^{-1}\vec{b}}$. In case a ``real'' (classical) solution is needed, then one can perform \emph{tomography} and \emph{norm estimation} on $\ket{\phi}$, and obtain a classical vector $\measured{\vec{x}}$ that is close to $\ket{\phi}$, so that finally we have a guarantee $\norm{\vec{x} - \measured{\vec{x}}} \leq \epsilon \norm{\vec{x}}$ for some $\epsilon > 0$.

\section{Our Results} 
Our main result is a quantum algorithm for the constrained portfolio optimization problem with the following convergence and running time guarantees. 

\begin{theorem}\label{thm:runtime}
	There is a quantum algorithm for the constrained portfolio optimization problem \ref{prob:portfolio} that outputs a solution $\vec{x}$ with the 
	following guarantees. \\
	(i) The objective value for $\vec{x}$ is within $\epsilon r$ of the optimum.  \\
	(ii) The solution satisfies the inequality constraints $\vec{x} \geq 0$, the equality constraints are satisfied approximately to precision $\delta$, that is $\norm{A\vec{x}_T - \vec{b}} \leq \delta\norm{A}$. \\
	(iii) The running time for the algorithm is
	\begin{equation*}
		T = \widetilde{O} \left( \sqrt{r} \log\left( n / \epsilon \right) \cdot \frac{n \kappa \zeta}{\delta^2}\log\left( \frac{\kappa \zeta}{\delta} \right)  \right).
	\end{equation*}
	where $\kappa = \max_{ i \in [T]} \kappa_{i}$ is the maximum condition number of the Newton matrix \eqref{eq:Newton system} over the $T= O(\sqrt{r} \log\left( n / \epsilon \right))$ iterations and $\zeta \leq \sqrt{n}$ is a parameter that appears in the quantum linear system solver. 
\end{theorem}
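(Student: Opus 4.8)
The plan is to instantiate the classical short-step IPM of Algorithm~\ref{alg:ipm} on the SOCP reduction~\eqref{prob:experimental portfolio SOCP} of the portfolio problem~\eqref{prob:portfolio}, and to replace the \emph{exact} solve of each Newton system~\eqref{eq:Newton system} by an approximate quantum solve followed by tomography and norm estimation. Since the reduction in~\eqref{prob:experimental portfolio SOCP} uses $r = O(n)$ cones (one cone $\lorentz^m$ for $\vec t$ plus the $n$ cones $\lorentz^0$ enforcing $\vec x \geq 0$), the classical guarantee gives $O(\sqrt r \log(\nu_0/\epsilon))$ iterations, and because the starting gap $\nu_0$ is polynomial in $n$ this is $\wt{\sqrt r \log(n/\epsilon)}$, the outer factor of $T$. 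At termination the duality gap is $\nu \leq \epsilon$, so $\vec x^T \vec s = r\nu \leq \epsilon r$; since this quantity bounds the primal objective against the dual value, it bounds the objective suboptimality by $\epsilon r$ and yields guarantee (i).

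For the running time I would bound the cost of a single iteration. The Newton matrix in~\eqref{eq:Newton system} has size $O(n)$, and I would apply a quantum linear system solver~\cite{R19, CGJ18} to produce a state $\ket{\phi}$ that is $\epsilon_{\text{ls}}$-close to $\ket{(\dx;\dy;\ds)}$ in time $\wt{\kappa\zeta\log(1/\epsilon_{\text{ls}})}$, where $\kappa$ is the condition number of the Newton matrix and $\zeta \leq \sqrt n$ controls the block-encoding/state-preparation overhead. To recover a classical step I would run $\ell_2$ tomography to relative error $\delta$, costing $\wt{n/\delta^2}$ copies of $\ket{\phi}$, together with norm estimation; choosing $\epsilon_{\text{ls}} = \mathrm{poly}(\delta/(\kappa\zeta))$ so that the state-preparation error does not dominate the tomographic error contributes the $\log(\kappa\zeta/\delta)$ factor. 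Multiplying the per-copy solver cost by the number of copies gives a per-iteration cost $\wt{\frac{n\kappa\zeta}{\delta^2}\log(\kappa\zeta/\delta)}$, and multiplying by the iteration count reproduces the stated $T$.

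The hard part will be the correctness and feasibility analysis, namely showing that the \emph{inexact} step $(\dxm, \dym, \dsm)$, each within relative error $\delta$ of the exact step, still drives the method to convergence. Here I would use the Euclidean Jordan-algebra proximity measure to the central path and argue that, for $\delta$ small enough relative to the neighborhood radius, the approximate update keeps $(\vec x, \vec s)$ strictly inside $\interior\lorentz$ and within the central-path neighborhood, so the gap still contracts by the factor $\sigma = 1 - 0.1/\sqrt r$ each step. The concession forced by the approximate solve is that $A\vec x = \vec b$ no longer holds exactly: because the third block of~\eqref{eq:Newton system} drives $A\dx$ toward $\vec b - A\vec x$ only up to the tomography error, the residual inherits the $\delta$-scale error, giving $\norm{A\vec x_T - \vec b} \leq \delta\norm{A}$ and guarantee (ii). The main obstacle I anticipate is controlling how these per-step errors interact with the cone-boundary behavior of the barrier so that they do not accumulate across the $\wt{\sqrt r \log(n/\epsilon)}$ iterations; I expect this to require feasibility-restoration arguments tailored to the arrowhead structure of $\Arw(\vec x)$ and $\Arw(\vec s)$, ensuring the neighborhood invariant is re-established at the start of each iteration.
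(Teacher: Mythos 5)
Your proposal follows essentially the same route as the paper: reduce \eqref{prob:portfolio} to the SOCP \eqref{prob:experimental portfolio SOCP}, quantize the short-step IPM by replacing each exact Newton solve with a block-encoded quantum linear system solve plus tomography and norm estimation (giving the per-iteration cost $\widetilde{O}\left(\frac{n\kappa\zeta}{\delta^2}\log\left(\frac{\kappa\zeta}{\delta}\right)\right)$), invoke the Jordan-algebra central-path analysis for inexact steps (which the paper imports from \cite{R19} as Theorem~\ref{thm:main}), and note that the equality-constraint residual is self-correcting --- since each Newton system has right-hand side $\vec{b}-A\vec{x}$, the errors telescope and only the final tomography error $\delta\norm{A}$ survives, exactly the paper's feasibility argument. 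The only slips are cosmetic: it is the first block row of \eqref{eq:Newton system}, not the third, that enforces $A\dx = \vec{b}-A\vec{x}$, and the inexact method contracts the gap by $1-\alpha/\sqrt{r}$ with $0<\alpha\leq\chi$ rather than the exact method's $1-0.1/\sqrt{r}$, so your worry about error accumulation is precisely what Theorem~\ref{thm:main} and the telescoping argument dispose of.
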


The quantum portfolio optimization algorithm is obtained using the reduction to the SOCP \eqref{prob:experimental portfolio SOCP} and then using a quantum short-step quantum interior point method for SOCPs described in \cite{R19} that has the following guarantees. 

\begin{theorem}\label{thm:runtimeS} \cite{R19} 
	There is a quantum algorithm that given a primal-dual pair of SOCPs \eqref{prob:SOCP dual} outputs solutions $(\vec{x},\vec{y},\vec{s})$ with the following guarantees.\\
	(i) The duality gap $\frac{1}{r} \vec{x}^{T} \vec{s} \leq \epsilon$.  \\
	(ii) The equality constraints are satisfied to precision $\delta$, that is\\$\norm{A\vec{x}_T - \vec{b}} \leq \delta\norm{A}$ and $\norm{A^T \vec{y}_T + \vec{s}_T - \vec{c}} \leq \delta \left( \norm{A} + 1 \right)$. \\
	(iii) The running time for the algorithm is
	\begin{equation*}
		T = \widetilde{O} \left( \sqrt{r} \log\left( n / \epsilon \right) \cdot \frac{n \kappa \zeta}{\delta^2}\log\left( \frac{\kappa \zeta}{\delta} \right)  \right).
	\end{equation*}
	where $\kappa = \max_{ i \in [T]} \kappa_{i}$ is the maximum condition number of the Newton matrix \eqref{eq:Newton system} over the $T= O(\sqrt{r} \log\left( n / \epsilon \right))$ iterations and $\zeta \leq \sqrt{n}$ is a parameter that appears in the quantum linear system solver. 
\end{theorem}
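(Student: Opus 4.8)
The plan is to quantize the classical short-step interior point method (Algorithm~\ref{alg:ipm}) by replacing the exact solution of the Newton system \eqref{eq:Newton system} at each iteration with an approximate solution obtained from a quantum linear system solver followed by tomography and norm estimation. Concretely, at each iteration I would build a block encoding of the Newton matrix from the quantum data structures storing $A$, $\Arw(\vec{x})$ and $\Arw(\vec{s})$, apply the quantum linear system algorithm to produce a state close to $\ket{(\dx; \dy; \ds)}$, and then run $\ell_2$ vector tomography together with norm estimation to extract classical approximations $\dxm, \dym, \dsm$ satisfying a relative error bound of the form $\norm{\dxm - \dx} \leq \delta \norm{\dx}$ and similarly for the other blocks. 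The entire difficulty then lies in showing that the perturbed iterates produced by these inexact Newton steps still converge like the exact method and remain approximately feasible.

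For the convergence and feasibility analysis (parts (i) and (ii)) I would work inside the Euclidean Jordan algebra associated with the product cone $\lorentz$, which supplies the spectral notions needed to control membership in $\interior \lorentz$. The argument has two ingredients. First, I would fix a neighborhood of the central path (measured by a Jordan-algebraic proximity quantity) and show that an exact short step keeps the iterate inside it while contracting the duality gap $\nu = \tfrac{1}{r}\vec{x}^T\vec{s}$ by the factor $\sigma = 1 - 0.1/\sqrt{r}$; this is the classical guarantee of \cite{MT00}. Second, I would bound the damage done by the tomography error: since an exact step would zero out the feasibility residuals $\vec{b} - A\vec{x}$ and $\vec{c} - \vec{s} - A^T\vec{y}$, any residual that survives is entirely attributable to the approximation and is controlled by $\delta$. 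Propagating this through the update $\vec{x} \gets \vec{x} + \dxm$, etc., yields the stated bounds $\norm{A\vec{x}_T - \vec{b}} \leq \delta\norm{A}$ and $\norm{A^T\vec{y}_T + \vec{s}_T - \vec{c}} \leq \delta(\norm{A}+1)$, while choosing $\delta$ small enough relative to the neighborhood width guarantees the perturbed iterate stays strictly feasible and the contraction of $\nu$ is essentially preserved; so after $O(\sqrt{r}\log(\nu_0/\epsilon))$ iterations the gap drops below $\epsilon$.

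For the running time (part (iii)) I would multiply the iteration count $T = O(\sqrt{r}\log(n/\epsilon))$ by the per-iteration cost. Each iteration is dominated by the tomography step, which to achieve $\ell_2$ precision $\delta$ on an $O(n)$-dimensional state requires $\wt{n/\delta^2}$ samples; each sample in turn requires one run of the quantum linear system solver, whose cost scales as $\kappa\zeta$ up to a $\log(\kappa\zeta/\delta)$ precision factor, where $\kappa$ is the condition number of the Newton matrix and $\zeta \leq \sqrt{n}$ is the block-encoding parameter. Multiplying these gives the per-iteration factor $\tfrac{n\kappa\zeta}{\delta^2}\log(\kappa\zeta/\delta)$ and hence the claimed $T$.

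The main obstacle is the error-propagation step. Unlike the exact feasible IPM, which never leaves the feasible region, the quantum method is effectively an \emph{infeasible} short-step method in which one must simultaneously control three quantities: the contraction of the duality gap, the drift of the iterate away from the central path, and the growth of the feasibility residuals. The delicate part is the spectral argument in the Jordan algebra certifying that $\vec{x} + \dxm$ and $\vec{s} + \dsm$ remain in $\interior \lorentz$ despite the tomography noise; this requires translating the $\ell_2$ error guarantee into a bound on the smallest Jordan-algebra eigenvalue of the updated point and verifying that the $O(\delta)$ perturbation cannot push it to the cone boundary.
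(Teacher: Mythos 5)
Your proposal follows essentially the same route as the paper: it quantizes the short-step IPM by replacing each Newton solve with a block-encoded quantum linear system solve plus tomography and norm estimation (the paper's Algorithm~\ref{alg:qipm}), invokes a Jordan-algebraic per-iteration analysis showing strict feasibility and contraction of the duality gap despite $O(\delta)$ noise (the paper's Theorem~\ref{thm:main}, imported from \cite{R19}), observes that the Newton step's self-correcting constraint $A\dx = \vec{b} - A\vec{x}$ makes the final infeasibility depend only on the last iteration's tomography error (the paper's explicit telescoping proof), and obtains the runtime as iteration count times the $\widetilde{O}(n\kappa\zeta/\delta^2)$ cost of tomography-plus-linear-system-solving per iteration. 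This matches the paper's decomposition of the argument, including its identification of the translation from $\ell_2$ tomography error to the Jordan-algebra spectral condition as the technically delicate step.
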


The quantum portfolio algorithm can achieves a significant polynomial speedup over the classical $O(n^{3.5} \log (n/\epsilon))$ time algorithm if the condition number $\kappa$ of the Newton matrices arising in the classical short-step interior point method is bounded. 
In Section \ref{sec:Numerical results} we present some numerical experiments that give some indications on how this parameter behaves for real-world problems. The experiments suggest that this parameter $\kappa$ in indeed bounded and that our algorithm achieves a speedup over the corresponding classical algorithm. 	

Let us make some observations comparing the main result with the classical short step interior point method. The quantum algorithm uses quantum linear algebra algorithms and tomography in order to implement a single step of the classical algorithm. As the solutions to the linear system are reconstructed using quantum tomography and are not exact, unlike the classical case the solutions may not be exactly feasible, the error parameter $\delta$ in part (ii) of Theorem \ref{thm:runtime} corresponds to the error induced by quantum tomography. 

The number of iterations for both the classical and quantum algorithms is $O \left( \sqrt{r} \log\left( n / \epsilon \right) \right)$. The analysis shows that for the quantum algorithm, the duality gap reduced by a factor $1 - \alpha/\sqrt{r}$ in each step where $\alpha <0.1$ is a constant. That is, the duality gap for the quantum algorithm decreases at a slower rate than that for the classical algorithm but the asymptotic decay rate is the same, implying that both algorithms converge in $O \left( \sqrt{r} \log\left( n / \epsilon \right) \right)$ iterations. Experiments suggest that the constant $\alpha$ is fairly close to the constant $\chi = 0.1$ for the classical algorithms, the number of iterations needed for convergence is therefore similar for both algorithms. 

This complexity of the quantum algorithm can be easily interpreted as product of the number of iterations and the cost of $n$-dimensional vector tomography with error $\delta$. Note that the same Newton linear system of dimension $O(n)$ is being solved by both the quantum and classical algorithms. The quantum portfolio optimization problem is particularly suited to the case where the number of stocks $n$ is large while number of budget constraints $r$ is small. This is because the factor $\kappa$ in the running time increases with the number of iterations, so quantum computers offer the maximum advantage over classical algorithms when the linear system size $n$ is large and the number of iterations for the IPM which scales as $\widetilde{O}(\sqrt{r})$ is small. 

\section{The quantum short step IPM} 
In this section we describe the quantum portfolio optimization algorithm. We begin by stating the state of the art quantum linear algebra results in Section \ref{qla}. The quantum short-step interior point method is then presented  in Section \ref{qipm}. 

\subsection{Quantum linear algebra} \label{qla} 
Given a matrix $A$ and a vector $\vec{b}$, the quantum linear system problem is to construct the quantum state $\ket{A^{-1}\vec{b}}$ (using the notation from \eqref{eq:quantum vector notation}). Once we have the state $\ket{x} = \ket{A^{-1} \vec{b}}$, can use either it in further computations or sample from the corresponding probability distribution. We can also perform \emph{tomography} on it to recover the underlying classical vector. 
If we perform tomography and norm estimation we obtain a classical vector $\measured{\vec{x}} \in \R^n$ that satisfies $\norm{\vec{x} - \measured{\vec{x}}} \leq \delta \norm{\vec{x}}$, where $A\vec{x} = \vec{b}$ and $A \in \R^{n \times n}$, $\vec{b} \in \R^n$. In this section we describe the state of the art quantum linear algebra algorithms for performing these operations. 

In this section, we also assume that $A$ is symmetric, since otherwise we can work with its symmetrized version $\operatorname{sym}(A) = \begin{bmatrix}
		0 & A \\
		A^T & 0 
\end{bmatrix}$. The quantum linear system solvers from \cite{CGJ18,GLSW18} require access to an efficient \emph{block encoding} for $A$, which is defined as follows. 
	\begin{definition} \label{use} 
		Let $A \in \R^{n\times n}$ be a matrix. Then, the $\ell$-qubit unitary matrix $U \in \C^{2^\ell \times 2^\ell}$ is a $(\zeta, \ell)$ block encoding of $A$ if $U = \begin{bmatrix}
		A / \zeta & \cdot \\
		\cdot & \cdot
		\end{bmatrix}$.
	\end{definition}
In order to perform quantum linear algebra operations for $A$ we need access to an efficient implementation of a unitary block encoding for $A$. That is we require that the unitary $U$ in definition \ref{use} be implemented efficiently, i.e. using an $\ell$-qubit quantum circuit of depth (poly)logarithmic in $n$. It turns out efficient construction and implementation of unitary block encodings for arbitrary matrices can be obtained using data structures for storing the matrices in QRAM (Quantum Random Access Memory) that were proposed in  \cite{KP16, KP17}. Moreover, these data structure can be constructed in linear time from a classical description of $A$ and support efficient updates. 
	\begin{theorem}[Block encodings using QRAM data structures \cite{KP16, KP17}] \label{qbe}
		There exist QRAM data structures for storing vectors $\vec{v}_i \in \R^n$, $i \in [m]$ and matrices $A \in \R^{n\times n}$ such that with access to these data structures one can do the following:
		\begin{enumerate}
			\item Given $i\in [m]$, prepare the state $\ket{\vec{v}_i}$ in time $\widetilde{O}(1)$. In other words, the unitary $\ket{i}\ket{0} \mapsto \ket{i}\ket{\vec{v}_i}$ can be implemented efficiently.
			\item A $(\zeta(A), 2 \log n)$ unitary block encoding for $A$ with $\zeta(A) = \min( \norm{A}_{F}/\norm{A}_{2}, s_{1}(A)/\norm{A}_{2})$, where $s_1(A) = \max_i \sum_j |A_{i, j}|$ can be implemented in time $\widetilde{O}(\log(n))$. Moreover, this block encoding can be constructed in a single pass over the matrix $A$, and it can be updated in $O(\log^{2} n)$ time per entry.
		\end{enumerate}
	\end{theorem}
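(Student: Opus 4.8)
The plan is to prove the theorem by explicitly constructing a classical tree data structure that is loaded into QRAM, following the approach of \cite{KP16, KP17}. For each stored vector I would maintain a balanced binary tree of depth $\lceil \log n \rceil$ whose $j$-th leaf holds the pair $(v_{i,j}^2, \operatorname{sign}(v_{i,j}))$ and whose every internal node holds the sum of the leaf values in its subtree; in particular the root holds $\norm{\vec{v}_i}^2$. Each tree has $O(n)$ nodes, so the whole collection uses $O(mn)$ words of memory, can be built in a single pass over the input, and supports updating a single entry in $O(\log n)$ time, since only the root-to-leaf path changes.

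For part (1), I would implement the state preparation $\ket{i}\ket{0} \mapsto \ket{i}\ket{\vec{v}_i}$ by a sequence of $\lceil \log n\rceil$ controlled rotations in the style of Grover--Rudolph. Reading the two children of the current node from QRAM, the algorithm rotates the next qubit by the angle whose squared cosine and sine equal the normalized subtree sums, and applies the stored signs at the leaf level; after $\log n$ levels the amplitude on $\ket{j}$ is exactly $v_{i,j}/\norm{\vec{v}_i}$. Since each QRAM lookup costs $\widetilde{O}(1)$ and there are $O(\log n)$ of them, the total cost is $\widetilde{O}(1)$.

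For part (2), I would store $A$ as one tree per row together with an auxiliary tree holding the vector of row norms $(\norm{A_1}, \dots, \norm{A_n})$, whose root equals $\norm{A}_F^2$. Using the subroutine of part (1) I would implement two maps realizable in time $\widetilde{O}(\log n)$: $U_R\colon \ket{i}\ket{0} \mapsto \ket{i}\tfrac{1}{\norm{A_i}}\sum_j a_{ij}\ket{j}$ and $U_L\colon \ket{0}\ket{j} \mapsto \tfrac{1}{\norm{A}_F}\big(\sum_i \norm{A_i}\ket{i}\big)\ket{j}$, on $2\log n$ qubits. A direct computation of the $(i,j)$ entry of $U_L^\dagger U_R$ gives $\tfrac{\norm{A_i}}{\norm{A}_F}\cdot\tfrac{a_{ij}}{\norm{A_i}} = \tfrac{a_{ij}}{\norm{A}_F}$, so the top-left block of $U := U_L^\dagger U_R$ equals $A/\norm{A}_F$; writing the stored matrix in its $\norm{A}_2$-normalized form then yields the block encoding with $\zeta = \norm{A}_F/\norm{A}_2$. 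The $s_1(A)/\norm{A}_2$ bound follows from the same template but storing $|a_{ij}|$ rather than $a_{ij}^2$ at the leaves and using the signed-amplitude trick, so that the subnormalization is controlled by the maximum row $\ell_1$-norm; taking the better of the two constructions gives $\zeta(A) = \min(\norm{A}_F/\norm{A}_2, s_1(A)/\norm{A}_2)$. Updating one entry now touches both the relevant row tree and the norm tree, each of depth $O(\log n)$, giving the claimed $O(\log^2 n)$ update cost.

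The main obstacle, and the step I would treat most carefully, is verifying that $U_L^\dagger U_R$ is genuinely unitary and deposits $A/\zeta$ in the stated block while reconciling the row-wise normalization with the global Frobenius and spectral normalization. The $s_1$ variant is the more delicate of the two, since the $|a_{ij}|$ are not squared amplitudes and the bound $\norm{A}_2 \le s_1(A)$ must be invoked (via a Schur/Gershgorin-type row-sum estimate, which holds cleanly in the symmetric case assumed for the linear system solver) to certify that the block encoding is well defined. The remaining work---bounding the bit-precision needed so that each rotation angle can be computed accurately enough, which is exactly where the $\widetilde{O}$ absorbs the polylogarithmic factors---is routine and would be deferred.
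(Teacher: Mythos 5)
The paper offers no proof of this theorem at all --- it is imported as a black box from \cite{KP16, KP17} --- and your reconstruction (per-vector binary trees storing squared entries plus signs, Grover--Rudolph conditional rotations for state preparation, and the block encoding $U := U_L^\dagger U_R$ built from the row states and the row-norm state, with the $s_1$ variant obtained by storing absolute values and invoking $\norm{A}_2 \leq s_1(A)$ for symmetric $A$) is precisely the standard argument of those cited works, so it matches the approach the paper implicitly relies on. One minor accounting slip: touching two trees of depth $O(\log n)$ gives only $O(\log n)$ modified nodes, and the stated $O(\log^2 n)$ update cost actually arises because each node write costs $O(\log n)$ for QRAM addressing --- but since your count sits within the claimed bound, this is harmless.
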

	Note that $\ket{i}$ is the notation for the $\left\lceil \log(m) \right\rceil$ qubit state corresponding to the binary expansion of $i$. The QRAM can be thought of as the quantum analogue to RAM, i.e. an array $[b^{(1)}, \dots, b^{(m)}]$ of $w$-bit bitstrings, whose elements we can access given their address (position in the array). More precisely, QRAM is just an efficient implementation of the unitary transformation 
	\begin{equation*}
		\ket{i}\ket{0}^{\otimes w} \mapsto \ket{i} \left( \ket{b^{(i)}_1} \otimes \cdots \otimes \ket{b^{(i)}_w} \right), \text{ for } i \in [m].
	\end{equation*}
	Nevertheless, from now on, we will also refer to storing vectors and matrices in QRAM, meaning that we use the data structure from Theorem \ref{qbe}. Once we have these block encodings, we may use them to perform linear algebra:
	\begin{theorem} \label{qlsa} 
		(Quantum linear algebra with block encodings) \cite{CGJ18, GLSW18} Let $A \in \R^{n\times n}$ be a matrix with non-zero eigenvalues in the interval $[-1, -1/\kappa] \cup [1/\kappa, 1]$, and let $\epsilon > 0$. Given an implementation of an $(\zeta, O(\log n))$ block encoding for $A$ in time $T_{U}$ and a procedure for preparing state $\ket{b}$ in time $T_{b}$, 
		\begin{enumerate} 
			\item A state $\epsilon$-close to $\ket{A^{-1} b}$ can be generated in time 
			$O((T_{U} \kappa \zeta+ T_{b} \kappa) \polylog(\kappa \zeta /\epsilon))$. 
			\item A state $\epsilon$-close to $\ket{A b}$ can be generated in time $O((T_{U} \kappa \zeta+ T_{b} \kappa) \polylog(\kappa \zeta /\epsilon))$. 
			\item For  $\mathcal{A} \in \{ A, A^{-1} \}$, an estimate $\Lambda$ such that $\Lambda \in (1\pm \epsilon) \norm{ \mathcal{A} b}$ can be generated in time $O((T_{U}+T_{b}) \frac{ \kappa \zeta}{ \epsilon} \polylog(\kappa \zeta/\epsilon))$. 
		\end{enumerate}
	\end{theorem}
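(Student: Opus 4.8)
The statement is quoted from \cite{CGJ18, GLSW18}, so the plan is to reconstruct its proof from the quantum singular value transformation (QSVT) framework. First I would reduce to the Hermitian case using the symmetrization $\operatorname{sym}(A)$ already described, so that the singular values of the block-encoded matrix coincide (up to sign) with its eigenvalues and, after the $1/\zeta$ rescaling inherent in the $(\zeta, O(\log n))$ block encoding, lie in $[1/(\kappa\zeta),1/\zeta]$. The central tool is the following: given the unitary $U$ block-encoding $A/\zeta$ and any real polynomial $p$ of degree $d$ with $|p| \leq 1$ on $[-1,1]$, one can build a block encoding of $p(A/\zeta)$ using $O(d)$ applications of $U$, $U^\dagger$, and controlled single-qubit phase rotations, at cost $O(d\, T_U)$. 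I would then instantiate $p$ by the two polynomial approximations driving parts (1) and (2): for inversion, a polynomial of degree $d = \widetilde{O}(\kappa\zeta)$ that $\epsilon$-approximates a rescaling of $1/x$ on $[1/(\kappa\zeta),1/\zeta]\cup[-1/\zeta,-1/(\kappa\zeta)]$, yielding a block encoding of $A^{-1}$ up to a subnormalization of order $\kappa$; for multiplication, the essentially linear polynomial reproducing $x$, which has low degree.

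With the block encoding of $A^{-1}/\kappa$ (resp. $A$) in hand, I would prepare $\ket{b}$ in time $T_b$, apply the block encoding, and post-select on the ancilla flag. The good branch carries the desired state $\ket{A^{-1}b}$ but with success amplitude only $\Omega(1/\kappa)$, so naive amplitude amplification would cost $O(\kappa)$ rounds and hence an extra factor of $\kappa$ on top of the $\widetilde{O}(\kappa\zeta\, T_U)$ cost of a single block-encoding application, giving a suboptimal $\widetilde{O}(\kappa^2\zeta\, T_U)$. The main obstacle, and the reason the stated bound is only linear in $\kappa$, is to replace this by variable-time amplitude amplification: one runs the inversion at a hierarchy of precision cutoffs so that eigencomponents with large singular values are amplified cheaply and only the small-singular-value components pay the full price, collapsing the product $\kappa \cdot \kappa\zeta$ to $\kappa\zeta$ up to polylogarithmic factors. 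Carefully propagating both the polynomial-approximation error and the amplification error into the final $\epsilon$-closeness guarantee, while keeping the degree at $\widetilde{O}(\kappa\zeta)$, is the delicate bookkeeping I expect to be the hardest part.

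Finally, for the norm estimation in part (3) I would use not amplitude amplification but amplitude estimation on the same post-selection event: the success probability of applying the block encoding of $\mathcal{A}$ to $\ket{b}$ equals $\norm{\mathcal{A}b}^2$ up to the known subnormalization, so estimating it to relative precision $\epsilon$ returns $\Lambda \in (1\pm\epsilon)\norm{\mathcal{A}b}$. Since amplitude estimation attains relative precision $\epsilon$ with $O(1/\epsilon)$ Grover-type iterates, each costing $O(T_U\kappa\zeta\,\polylog + T_b)$, this accounts for the characteristic $1/\epsilon$ (rather than $\polylog(1/\epsilon)$) dependence and the $(T_U + T_b)\frac{\kappa\zeta}{\epsilon}\polylog(\kappa\zeta/\epsilon)$ running time, completing the three parts.
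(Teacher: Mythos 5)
This theorem is imported by citation from \cite{CGJ18, GLSW18}; the paper itself contains no proof of it, so the only meaningful comparison is against the cited sources. Your reconstruction --- symmetrization, QSVT applied to a degree-$\widetilde{O}(\kappa\zeta)$ polynomial approximation of the inverse (giving a block encoding of $A^{-1}$ with subnormalization $O(\kappa)$), variable-time amplitude amplification to collapse the naive $\kappa^2\zeta$ cost to $\kappa\zeta$, and amplitude estimation with $O(1/\epsilon)$ iterates for part (3) --- is exactly the argument used in those works, and it is sound.
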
  
	\noindent Finally, in order to recover classical information from the outputs of a linear system solver, we require an efficient procedure for quantum state tomography. The tomography procedure is linear in the dimension of the quantum state. 
	\begin{theorem}[Efficient vector state tomography, \cite{KP18}]\label{vector state tomography}
		Given a unitary mapping $U:\ket{0} \mapsto \ket{\vec{x}}$ in time $T_U$ and $\delta > 0$, there is an algorithm that produces an estimate $\measured{\vec{x}} \in \R^d$ with $\norm{\measured{\vec{x}}} = 1$ such that $\norm{\vec{x} - \measured{\vec{x}}} \leq  \delta$ with probability at least $(1 - 1/d^{0.83})$ in time $O\left( T_U \frac{d \log d}{\delta^2}\right)$.
	\end{theorem}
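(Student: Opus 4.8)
The plan is to reconstruct $\vec{x}$ in two stages: first estimate the magnitudes $|x_i|$ by sampling $\ket{\vec x}$ in the computational basis, then recover the signs of the $x_i$ (the theorem asks for a real estimate, so each amplitude is $\pm|x_i|$) by a Hadamard-test-style interference measurement against a reference state built from the estimated magnitudes. For the magnitude stage, I would prepare $N = \Theta(d \log d / \delta^2)$ copies of $\ket{\vec x}$ using $U$ and measure each in the computational basis, which returns outcome $i$ with probability $p_i = x_i^2$. Letting $n_i$ be the number of times $i$ occurs and $\hat p_i = n_i/N$, I take $\sqrt{\hat p_i}$ as the estimate of $|x_i|$, and store the vector $(\sqrt{\hat p_i})_{i\in[d]}$ in the QRAM data structure of Theorem~\ref{qbe} so that the reference state $\ket{p} := \sum_i \sqrt{\hat p_i}\ket{i}$ can be prepared in time $\widetilde{O}(1)$.

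For the sign stage, using one call to $U$ controlled on an ancilla together with the preparation of $\ket p$, I would create $\frac{1}{\sqrt 2}(\ket 0\ket{\vec x} + \ket 1 \ket p)$, apply a Hadamard to the ancilla, and measure both registers. The outcome $(0,i)$ occurs with probability $\frac14(x_i + \sqrt{\hat p_i})^2$ and $(1,i)$ with probability $\frac14(x_i - \sqrt{\hat p_i})^2$, so comparing the two empirical frequencies over $\Theta(N)$ repetitions reveals $\operatorname{sign}(x_i)$ whenever $|x_i|$ is not too small. I then set $\measured{x}_i = \operatorname{sign}_i \cdot \sqrt{\hat p_i}$ and finally rescale so that $\norm{\measured{\vec x}} = 1$.

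The quantitative heart of the error analysis is the coordinate-wise bound
\[
\E\Big[\sum_{i\in[d]}(\sqrt{p_i}-\sqrt{\hat p_i})^2\Big] \le \frac{d}{N},
\]
which follows from $(\sqrt{\hat p_i}-\sqrt{p_i})^2 \le (\hat p_i - p_i)^2/p_i$ together with $\operatorname{Var}(\hat p_i) = p_i(1-p_i)/N$. With $N = \Theta(d\log d/\delta^2)$ this makes the magnitude contribution to $\norm{\vec x - \measured{\vec x}}^2$ be $O(\delta^2)$ in expectation. A misidentified sign at coordinate $i$ replaces the small error $(\sqrt{p_i}-\sqrt{\hat p_i})^2$ by the large error $(\sqrt{p_i}+\sqrt{\hat p_i})^2$, but such a flip is likely only when $|x_i|$ is close to $0$, whose contribution is already small; summing shows the total extra error from sign flips is again $O(\delta^2)$. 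The final normalization changes the error by at most a constant factor, since $\norm{\measured{\vec x}}$ is already $1 \pm O(\delta)$.

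The main obstacle is upgrading these expectation bounds to the high-probability guarantee $1 - 1/d^{0.83}$ without paying more than the $\log d$ factor already present in $N$. Markov's inequality applied to the displayed expectation yields only a constant success probability, so the argument must instead control the error coordinate-wise --- each $n_i$ concentrates around $N p_i$ by a Chernoff/Bernstein estimate --- and then combine the $d$ coordinates by a union bound (or a bounded-differences martingale on the sum), where the extra $\log d$ samples supply exactly the slack needed and the specific exponent $0.83$ emerges from optimizing the constants in this concentration step. A secondary subtlety is the dependence between the two stages, since the sign test reuses the Stage-1 magnitudes; I would handle this by conditioning on the event that the Stage-1 estimates are accurate, which holds with the same high probability.
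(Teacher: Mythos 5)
This theorem is not proved in the paper at all---it is imported verbatim from \cite{KP18}---but your two-stage reconstruction (computational-basis sampling with $N = \Theta(d\log d/\delta^2)$ shots for the magnitudes, then a Hadamard-test interference of controlled-$U$ against the QRAM-prepared reference state $\ket{p}$ for the signs, with sign errors only hurting at coordinates whose contribution is already small) is exactly the algorithm and analysis of the cited source. The step you flag as the main obstacle is handled in \cite{KP18} precisely as you propose: coordinate-wise concentration (of $\sqrt{n_i}$ around $\sqrt{Np_i}$, and of the sign-test counts) followed by a union bound over the $d$ coordinates, with the $\log d$ factor in $N$ supplying the slack and the exponent $0.83$ coming from the constants, so the proposal matches the intended proof in both algorithm and argument.
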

	Repeating this algorithm $\widetilde{O}(1)$ times allows us to increase the success probability to at least $1 - 1/\poly n$. Putting together Theorems \ref{qbe}, \ref{qlsa} and \ref{vector state tomography}, we obtain that there is a quantum algorithm that outputs a classical solution to the linear system $A\vec{x} = \vec{b}$ to accuracy $\delta$ in the $\ell_{2}$-norm in time $\widetilde{O} \left( n \cdot \frac{\kappa \zeta}{\delta^2} \right)$. For well-conditioned matrices, this presents a significant speedup over $O(n^\omega)$ needed for solving linear systems classically, especially for large $n$ and moderately small values of $\delta$. 

\subsection{The Quantum short-step IPM} \label{qipm} 
The quantum portfolio optimization is described as Algorithm \ref{alg:qipm}. It first computes from the dataset the vector $\mu$ and matrix $M$ as required for the portfolio optimization problem formulated in equation \eqref{prob:portfolio}.  Using $M$ and $\mu$, it then computes the constraint matrices for the SOCP \eqref{prob:experimental portfolio SOCP} equivalent the portfolio optimization. It then invokes the short-step quantum interior point method for SOCPs \cite{R19} and follows the steps there to obtain a solution to the SOCP. The solution to the portfolio optimization problem is then recovered from the SOCP solution. 
The construction of block encodings for the Newton matrices arising in the portfolio optimization problem is much easier than that for general SDPs. 
\begin{algorithm} 
		\caption{The quantum interior point method for Portfolio optimization. } \label{alg:qipm} 
		\begin{algorithmic}[1]
			\REQUIRE Dataset consisting of historical prices for $n$ financial assets.    \\
			\begin{enumerate} 
				\item Compute vector $\mu$ and matrix $M$ as in equation \eqref{eq:one} from the dataset.
				\item Store the constraint matrix for the SOCP formulation \eqref{prob:experimental portfolio SOCP} of the portfolio 
				optimization problem in the QRAM using the matrices from Step 1. 
				\item The remaining steps of this algorithm solve this SOCP using the short-step IPM from \cite{R19}. Repeat the following steps for $T$ iterations. 
				\begin{enumerate}
					\item Construct the block encoding for Newton linear system matrix \eqref{eq:Newton system} for the SOCP. 
					
					\hspace{-1.1cm} {\textbf{Estimate} $\left( \dx ; \dy ; \ds \right)$}\\
					
					\item {\textit{Estimate norm of} $\left( \dx ; \dy ; \ds \right)$.}  
					
					Solve the Newton linear system and perform norm estimation as in Theorem \ref{qlsa} to obtain $\measured{\norm{\left( \dx ; \dy ; \ds \right)}}$ such that 
					with probability $1-1/\poly n$, 
					\[ \measured{ \norm{\left( \dx ; \dy ; \ds \right)} }  \in (1 \pm \delta)  \norm{\left( \dx ; \dy ; \ds \right)}.     \]
					
					\item {\em Estimate $\left( \dx ; \dy ; \ds \right)$.}
					
					Let $U_N$ the procedure that solves the Newton linear system to produce states $\ket{\left( \dx ; \dy ; \ds \right)}$ to accuracy $\delta^2/n^3$.\\
					Perform tomography with $U_N$ (Theorem \ref{vector state tomography}) and use the norm estimate from (b) to obtain the classical estimate $\measured{\left( \dx ; \dy ; \ds \right)}$ such that with 
					probability $1-1/\poly n$, 
					\[\norm{ \measured{\left( \dx ; \dy ; \ds \right)} - \left( \dx ; \dy ; \ds \right)} \leq 2\delta \norm{\left( \dx ; \dy ; \ds \right)}.\]
					\hspace{-1.1cm} {\textbf{Update solution}}\\
					\item Update $\vec{x} \gets \vec{x} + \dxm$, $\vec{s} \gets \vec{s} + \dsm$ and store in QRAM. \\Update $\nu \gets \frac1r \vec{x}^T \vec{s}$. 
				\end{enumerate} 
				\item At the end of $T$ iterations we have the SOCP solution $(\vec{x}, \vec{y}, \vec{s})$. Output $x$ as the solution to the portfolio optimization problem. 
			\end{enumerate}
		\end{algorithmic}
	\end{algorithm}

The analysis of the quantum portfolio optimization algorithm follows from the analysis of a single iteration of the approximate short-step IPM for SOCPs \cite{R19}, from which the runtime and convergence results follow as consequences: 
	\begin{theorem} \label{thm:main} \cite{R19} 
		Let $\chi = \eta = 0.01$ and $\xi = 0.001$ be positive constants and let $(\vec{x}, \vec{y}, \vec{s})$ be solutions of \eqref{prob:SOCP primal} and \eqref{prob:SOCP dual} with $\nu = \frac1r \vec{x}^T \vec{s}$ and $d(\vec{x}, \vec{s}, \nu) \leq \eta\nu$. Then, the Newton system \eqref{eq:Newton system} has a unique solution $(\dx, \dy, \ds)$. If we assume that $\dxm, \dsm$ are approximate solutions of \eqref{eq:Newton system} that satisfy
		\begin{align*} 
		\norm{\dx - \dxm}_F &\leq \frac{\xi}{\norm{ (2\Arw^{2}(\vec{x}) - \Arw(\Arw(\vec{x}) \vec{x}))^{-1/2}   }} \text{ and } \\
		\norm{\ds - \dsm}_F &\leq \frac{\xi}{\norm{ (2\Arw^{2}(\vec{s}) - \Arw(\Arw(\vec{s}) \vec{s}))^{-1/2}   }} 
		\end{align*} 
		If we let $\xnext := \vec{x} + \dxm$ and $\snext := \vec{s} + \dsm$, the following holds:
		\begin{enumerate}
			\item The updated solution is strictly feasible, i.e. $\xnext \in \interior \lorentz$ and $\snext \in \interior \lorentz$.
			\item The updated solution satisfies $d(\xnext, \snext, \measured{\nu}) \leq \eta\measured{\nu}$ and \\$\frac1r \xnext^T \snext = \measured{\nu}$ for $\measured{\nu} = \measured{\sigma}\nu$, $\measured{\sigma} = 1 - \frac{\alpha}{\sqrt{r}}$ and a constant $0 < \alpha \leq \chi$.
		\end{enumerate}
	\end{theorem}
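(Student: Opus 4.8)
The plan is to run the standard short-step interior-point analysis for symmetric cones (as in \cite{MT00}), carried out in the Euclidean Jordan algebra of the Lorentz cone, and then to layer the tomography error on top of the exact analysis. Throughout I would identify $\Arw(\vec{x})\vec{s}$ with the Jordan product $\vec{x}\circ\vec{s}$, so that the centrality condition is $\vec{x}\circ\vec{s}=\nu\vec{e}$ and the proximity measure $d(\vec{x},\vec{s},\nu)$ records the deviation of $\vec{x}\circ\vec{s}$ from $\nu\vec{e}$ in the scaled local norm. The uniqueness claim comes first and is routine: when $(\vec{x},\vec{s})$ is strictly feasible both $\Arw(\vec{x})$ and $\Arw(\vec{s})$ are positive definite, and together with full row rank of $A$ this makes the block matrix in \eqref{eq:Newton system} nonsingular, giving a unique $(\dx,\dy,\ds)$.

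Next I would isolate the exact-step core. Reading off the third block row of \eqref{eq:Newton system} gives $\vec{s}\circ\dx+\vec{x}\circ\ds=\sigma\nu\vec{e}-\vec{x}\circ\vec{s}$, so that after a full step
\[
(\vec{x}+\dx)\circ(\vec{s}+\ds)=\sigma\nu\vec{e}+\dx\circ\ds .
\]
Hence the only obstruction to perfect centrality at the reduced parameter $\bar{\nu}=\sigma\nu$ is the second-order term $\dx\circ\ds$. Working in the Nesterov--Todd scaling, where $\vec{x}$ and $\vec{s}$ map to a common point and the quadratic-representation operator $Q_{\vec{z}}=2\Arw^2(\vec{z})-\Arw(\Arw(\vec{z})\vec{z})$ governs the local geometry, I would bound $\norm{\dx\circ\ds}$ by the product of the scaled step norms and invoke the short-step choice $\sigma=1-0.1/\sqrt{r}$ together with the hypothesis $d(\vec{x},\vec{s},\nu)\le\eta\nu$ to show this term is $O(\nu/\sqrt{r})$, keeping the exact proximity comfortably below $\eta\bar{\nu}$. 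This part should go through essentially verbatim from \cite{MT00}.

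The main new work, and the step I expect to be the obstacle, is propagating the tomography error. Writing $\dxm=\dx+\vec{e}_x$ and $\dsm=\ds+\vec{e}_s$, the post-step product acquires the extra terms $(\vec{x}+\dx)\circ\vec{e}_s+\vec{e}_x\circ(\vec{s}+\ds)+\vec{e}_x\circ\vec{e}_s$. The normalization in the hypotheses is designed exactly for this: since $2\Arw^2(\vec{x})-\Arw(\Arw(\vec{x})\vec{x})$ is precisely $Q_{\vec{x}}$, the bound $\norm{\dx-\dxm}_F\le\xi/\norm{Q_{\vec{x}}^{-1/2}}$ says $\norm{Q_{\vec{x}}^{-1/2}\vec{e}_x}\le\xi$, i.e. $\vec{e}_x$ is small in the local norm at $\vec{x}$, and similarly for $\vec{e}_s$. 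I would translate each extra term into the scaled geometry, bound it by a constant multiple of $\xi\nu$, and then verify that with $\xi=0.001$ small relative to $\eta=\chi=0.01$ the exact second-order contribution and the error contributions together stay under the $\eta\bar{\nu}$ budget, yielding $d(\xnext,\snext,\bar{\nu})\le\eta\bar{\nu}$. The delicate point is precisely this constant bookkeeping. Taking the Jordan trace $\frac1r\vec{e}^T(\cdot)$ of the same expansion gives $\frac1r\xnext^T\snext=\sigma\nu+\frac1r\vec{e}^T(\dx\circ\ds+\vec{e}_x,\vec{e}_s\text{-terms})$, which I would rewrite as $\bar{\nu}=\bar{\sigma}\nu$ with $\bar{\sigma}=1-\alpha/\sqrt{r}$ and check $0<\alpha\le\chi$.

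Finally, for strict feasibility I would argue that the bound $d(\xnext,\snext,\bar{\nu})\le\eta\bar{\nu}$ with small $\eta$ forces every Jordan eigenvalue of $\xnext\circ\snext$ into $(1\pm\eta)\bar{\nu}$, hence bounded away from zero, so both $\xnext$ and $\snext$ have strictly positive spectra and lie in $\interior\lorentz$. A standard continuity argument along the step (the update cannot exit the cone without the proximity measure blowing up) rules out the degenerate case and closes the proof.
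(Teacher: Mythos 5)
Your overall route is the intended one: the paper does not actually prove Theorem~\ref{thm:main} itself but imports it from \cite{R19}, and the strategy it sketches --- the short-step analysis of \cite{MT00} redone in the Euclidean Jordan algebra of the Lorentz cone, with Nesterov--Todd scaling and with the approximation errors measured in the local norms induced by $Q_{\vec{x}} = 2\Arw^{2}(\vec{x}) - \Arw(\Arw(\vec{x})\vec{x})$ --- is exactly what you outline: the expansion $(\vec{x}+\dx)\circ(\vec{s}+\ds) = \sigma\nu\vec{e} + \dx\circ\ds$, the budget argument comparing second-order and error terms against $\eta\measured{\nu}$, the trace identity for the new duality gap, and a continuity argument for membership in $\interior\lorentz$.

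There is, however, one step that fails as you argue it: uniqueness of the Newton direction. You claim that strict feasibility (positive definiteness of $\Arw(\vec{x})$ and $\Arw(\vec{s})$) together with full row rank of $A$ already makes the matrix in \eqref{eq:Newton system} nonsingular, and you explicitly treat this as routine, without the hypothesis $d(\vec{x},\vec{s},\nu)\le\eta\nu$. That inference is valid for LP, where the corresponding matrices are diagonal and commute, but \eqref{eq:Newton system} is an AHO-type system, and there positive definiteness of the two blocks is not sufficient. The kernel computation reduces nonsingularity to the claim that no nonzero $\vec{u}\in\ker A$ satisfies $\vec{u}^{T}\Arw(\vec{x})^{-1}\Arw(\vec{s})\,\vec{u}=0$; the matrix $\Arw(\vec{x})^{-1}\Arw(\vec{s})$ has positive eigenvalues, but its symmetric part can be indefinite when $\vec{x}$ and $\vec{s}$ do not operator-commute. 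Concretely, for the single cone $\lorentz^{2}$ with $\vec{x}=(1;0.9,0)$ and $\vec{s}=(1;0,0.9)$, the symmetric part of $\Arw(\vec{x})^{-1}\Arw(\vec{s})$ has positive $1\times 1$ and $2\times 2$ leading minors but negative determinant, hence is indefinite; picking a nonzero $\vec{u}$ with $\vec{u}^{T}\Arw(\vec{x})^{-1}\Arw(\vec{s})\vec{u}=0$ and a full-row-rank $A$ with $\ker A=\operatorname{span}\{\vec{u}\}$, the vector $\left(\vec{u},\,\vec{v},\,-\Arw(\vec{x})^{-1}\Arw(\vec{s})\vec{u}\right)$ lies in the kernel of the Newton matrix, where $\vec{v}$ solves $A^{T}\vec{v}=\Arw(\vec{x})^{-1}\Arw(\vec{s})\vec{u}$ (solvable because this right-hand side is orthogonal to $\vec{u}$). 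This is the SOCP analogue of the well-known possible singularity of the AHO direction for SDP away from the central path. The repair is to use the hypothesis you set aside: under $d(\vec{x},\vec{s},\nu)\le\eta\nu$ with $\eta=0.01$ the iterate is nearly centered and nonsingularity does hold in that neighborhood; alternatively, prove uniqueness for the NT-scaled Newton system, as \cite{R19} in fact does, where it follows from positive definiteness of the scaling operator.
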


The proof of the Theorem \ref{thm:main} is similar to the analysis of interior point methods with approximation errors incorporated at each step. The analysis for SOCPs is carried out using the mathematical framework of Euclidean Jordan algebras that provides a unified analysis for LPs, SOCPs and SDPs generalizing the analysis known for the case of the exact interior point methods in the optimization literature \cite{R19}. 

In order to complete the analysis of the quantum portfolio optimization algorithm, in addition to the above Theorem, we need a bound on the infeasibility of the linear constraints $A\vec{x} = \vec{b}$ that are not exactly satisfied due to tomography errors. It turns out that this error is not accumulated, but is instead determined just by the final tomography precision. This is an improvement upon the quantum interior point method for SDPs proposed \cite{KP18} where the errors accumulate leading to worse parameters for the accuracy of the solutions. 
	\begin{theorem}
		Let \eqref{prob:SOCP primal} be a SOCP as in Theorem \ref{thm:runtime}. Then, after $T$ iterations, the (linear) infeasibility of the final iterate $\vec{x}_{T}$  is bounded as
		\begin{align*}
		\norm{A\vec{x}_T - \vec{b}} &\leq \delta\norm{A}. 
		\end{align*}
	\end{theorem}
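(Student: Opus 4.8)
The plan is to track the primal residual $\vec{r}_t := A\vec{x}_t - \vec{b}$ across the iterations and show that it collapses to a single-step quantity. First I would record the update rule from Algorithm \ref{alg:qipm}, namely $\vec{x}_t = \vec{x}_{t-1} + \dxm_t$, where $\dxm_t$ is the tomographic estimate of the exact Newton step $\dx_t$ computed at iteration $t$. The crucial structural fact is that the Newton system \eqref{eq:Newton system} is formed against the \emph{current} iterate, so its first block row reads $A\dx_t = \vec{b} - A\vec{x}_{t-1}$; in other words, the exact Newton step is exactly the correction that would restore feasibility of $\vec{x}_{t-1}$. Note that the initial iterate is strictly feasible, so $\vec{r}_0 = 0$.

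Combining these two facts gives the key identity:
\begin{align*}
\vec{r}_t &= A\vec{x}_t - \vec{b} = (A\vec{x}_{t-1} - \vec{b}) + A\dxm_t \\
&= \vec{r}_{t-1} + A\dx_t + A(\dxm_t - \dx_t) = A(\dxm_t - \dx_t),
\end{align*}
since $A\dx_t = -\vec{r}_{t-1}$ annihilates the incoming residual. The point to emphasize is that $\vec{r}_t$ depends \emph{only} on the tomography error $\dxm_t - \dx_t$ at step $t$, and not at all on the accumulated infeasibility $\vec{r}_{t-1}$: the exact part of the Newton step always cancels whatever residual was present. This is precisely the non-accumulation phenomenon referred to above, and it is what distinguishes this analysis from the SDP interior-point method of \cite{KP18}, where the right-hand side is not recomputed against the true residual and the errors add up over the $T$ iterations.

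Specializing to $t = T$ and applying submultiplicativity of the operator norm, I would bound
\[
\norm{A\vec{x}_T - \vec{b}} = \norm{A(\dxm_T - \dx_T)} \leq \norm{A}\,\norm{\dxm_T - \dx_T},
\]
and then invoke the tomography guarantee of Algorithm \ref{alg:qipm}, which controls $\norm{\dxm_T - \dx_T}$ (a subvector of the full estimated step $\measured{(\dx_T;\dy_T;\ds_T)}$) by the prescribed precision. Up to the normalization of the tomographic estimate this yields $\norm{A\vec{x}_T - \vec{b}} \leq \delta\norm{A}$.

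The main obstacle is conceptual rather than computational: one has to recognize that the telescoping collapses because the Newton step is recomputed against the true residual $\vec{b} - A\vec{x}_{t-1}$ at every iteration, rather than being taken against a stale or assumed-feasible right-hand side; once that is identified, the argument reduces to the one-line identity above plus a norm inequality. The only genuine care needed is in the normalization of the tomography bound, which in Algorithm \ref{alg:qipm} is stated relative to $\norm{(\dx_T;\dy_T;\ds_T)}$, so I would verify that this concatenated step norm is controlled by a constant (so that the accompanying $O(1)$ factor can be absorbed into $\delta$) or otherwise carry it explicitly to obtain the clean final bound.
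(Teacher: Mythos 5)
Your proposal is correct and follows essentially the same route as the paper: both rest on the observation that the first block row of the Newton system, $A\dx_t = \vec{b} - A\vec{x}_{t-1}$, exactly cancels the incoming residual, yielding the identity $A\vec{x}_T - \vec{b} = A\left(\dxm_T - \dx_T\right)$ and hence the bound $\delta\norm{A}$. Your one-step recursion on $\vec{r}_t$ is merely a tidier packaging of the paper's telescoping computation, and your closing caveat about the relative normalization of the tomography guarantee (it is stated relative to $\norm{\left(\dx;\dy;\ds\right)}$) is a point of care that the paper's own final step glosses over.
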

	\begin{proof}
		Let $\vec{x}_T$ be the solution at the $T$-th iterate. Then, the following holds for $A\vec{x}_T - \vec{b}$:
		\begin{equation}\label{eq:eq11}
		A \vec{x}_T - \vec{b} = A\vec{x}_0 + A\sum_{t=1}^{T} \dxm_t - \vec{b} = A \sum_{t=1}^T \dxm_t.
		\end{equation}
		On the other hand, the Newton system at iteration $T$ has the constraint $A\dx_T = \vec{b} - A\vec{x}_{T-1}$, which we can further recursively transform as, 
		\begin{align*}
		A\dx_T &= \vec{b} - A\vec{x}_{T-1} = \vec{b} - A\left( \vec{x}_{T-2} + \dxm_{T-1} \right) \\
		&= \vec{b} - A\vec{x}_0 - \sum_{t=1}^{T-1} \dxm_t = - \sum_{t=1}^{T-1} \dxm_t.
		\end{align*}
		Substituting this into equation \eqref{eq:eq11}, we get
		\[
		A\vec{x}_T - \vec{b} = A \left( \dxm_T - \dx_T \right).
		\]
		
		Finally, we can bound the norm of this quantity, 
		\begin{align*}
		\norm{A\vec{x}_T - \vec{b}} &\leq \delta\norm{A}. 
		\end{align*}
	\end{proof}
	
\section{Experimental results} \label{sec:Numerical results} 
	In the experiments, we solve the following portfolio problem
\begin{equation}
\begin{array}{ll}
\min & \vec{x}^T \Sigma \vec{x} \\
\text{s.t.}& \vec{\mu}^T \vec{x} = R \\
& x \geq 0,
\end{array} \label{prob:experimental portfolio}
\end{equation}
i.e. just the problem \eqref{prob:portfolio} without the ``complicated'' linear constraints.
The dataset was obtained from cvxportfolio repository \cite{BB17}, this dataset contains historical data about the stocks of the S\&P-500 companies for each day over a period of 9 years (2007-2016). We sub-sampled the dataset to 50 companies and considered the stock performance for the first 100 days for our experiments. We computed the optimal portfolio for this dataset. 

In order to simulate the quantum algorithm, we added a Gaussian noise of magnitude corresponding to the tomography precision in the convergence Theorem \ref{thm:main}. The tomography precision in fact has a mathematical interpretation and is the Jordan algebra analog of the minimum eigenvalue for the solution matrices in the SDP case. We plotted these parameters for simulations of the classical algorithm where the noise is $0$ and for the quantum algorithm where the noise is chosen according to the tomography precision as described above. The plots are illustrated in the Figure \ref{fig:mu delta evolution}. 

\begin{figure}[h]
  \centering
  \includegraphics[width=\linewidth]{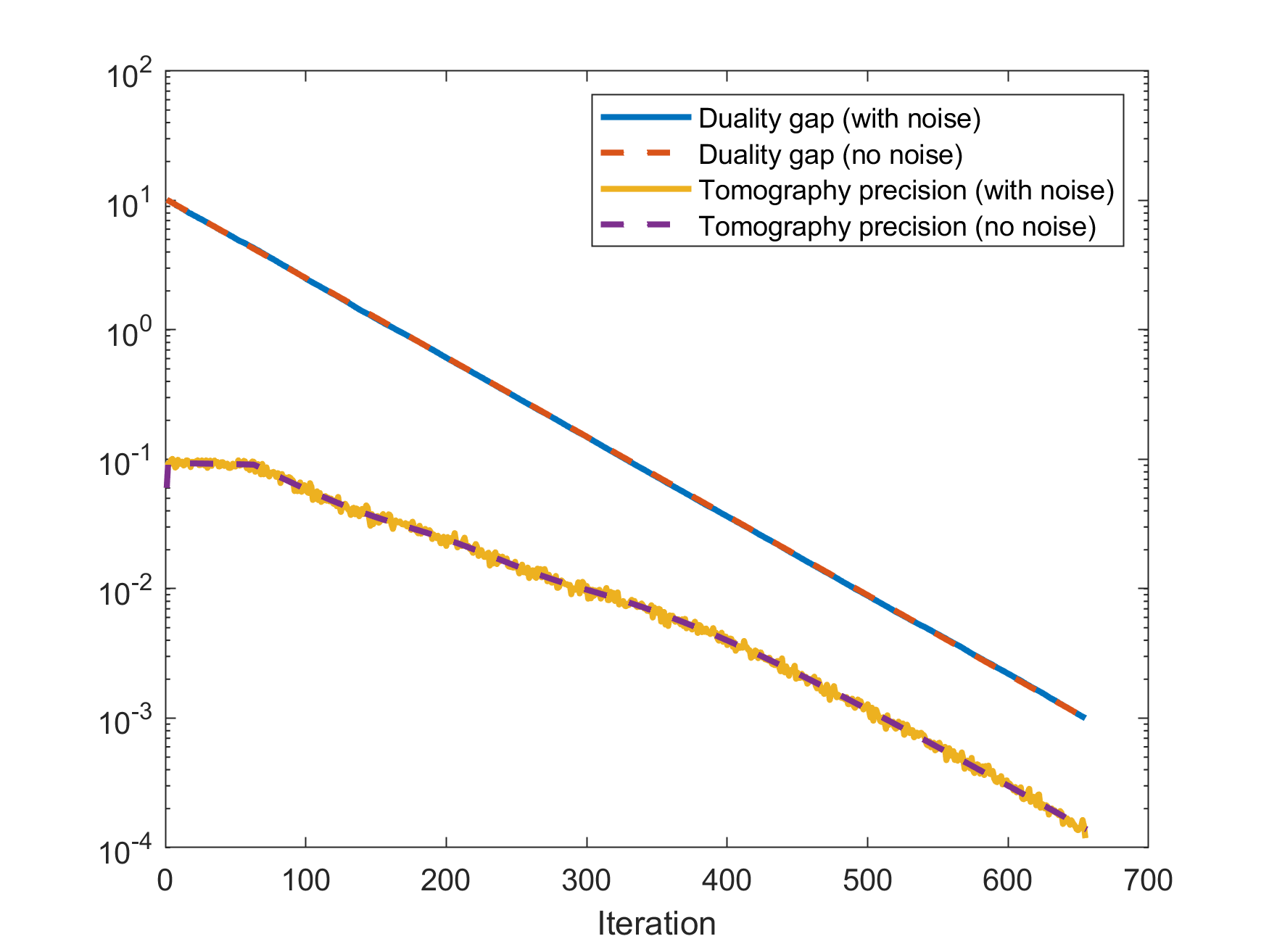}
  \caption{Duality gap and tomography precision vs iteration count for classical and quantum algorithms.}
  \label{fig:mu delta evolution}
\end{figure}

Figure \ref{fig:mu delta evolution} shows that the duality gap $\nu$ decreases multiplicatively by a factor of $\sigma$ as predicted by the analysis for both the classical and quantum algorithms. The rate of decrease for the quantum algorithm is close to that for the classical algorithm, that is the constant $\alpha$ that determines the rate of decrease $(1- \alpha/\sqrt{n})$ for the quantum algorithm is close to the constant $\chi = 0.1$ for the classical algorithm. After a certain point, the required tomography precision in the convergence Theorem \ref{thm:main} is close to the duality gap and shows a similar multiplicative decrease. 
Thus, the experiments suggest that the required tomography precision is indeed bounded in terms of duality gap and does not contribute a large factor to the running time. 

Figure \ref{fig:kappa} plots the condition number $\kappa$ of the Newton matrix against the iteration count. Even though theory \cite{dollar2005iterative} suggests  that the condition number $\kappa$ grows as the inverse of the duality gap $\nu$, it seems that in practice this upper bound is not attained. In fact, experiments suggest that the growth for the condition number over $T$ iterations is $O\left( 1 / \nu^{\alpha} \right)$ for $\alpha < 0.5$. 

\begin{figure}[h]
  \centering
  \includegraphics[width=\linewidth]{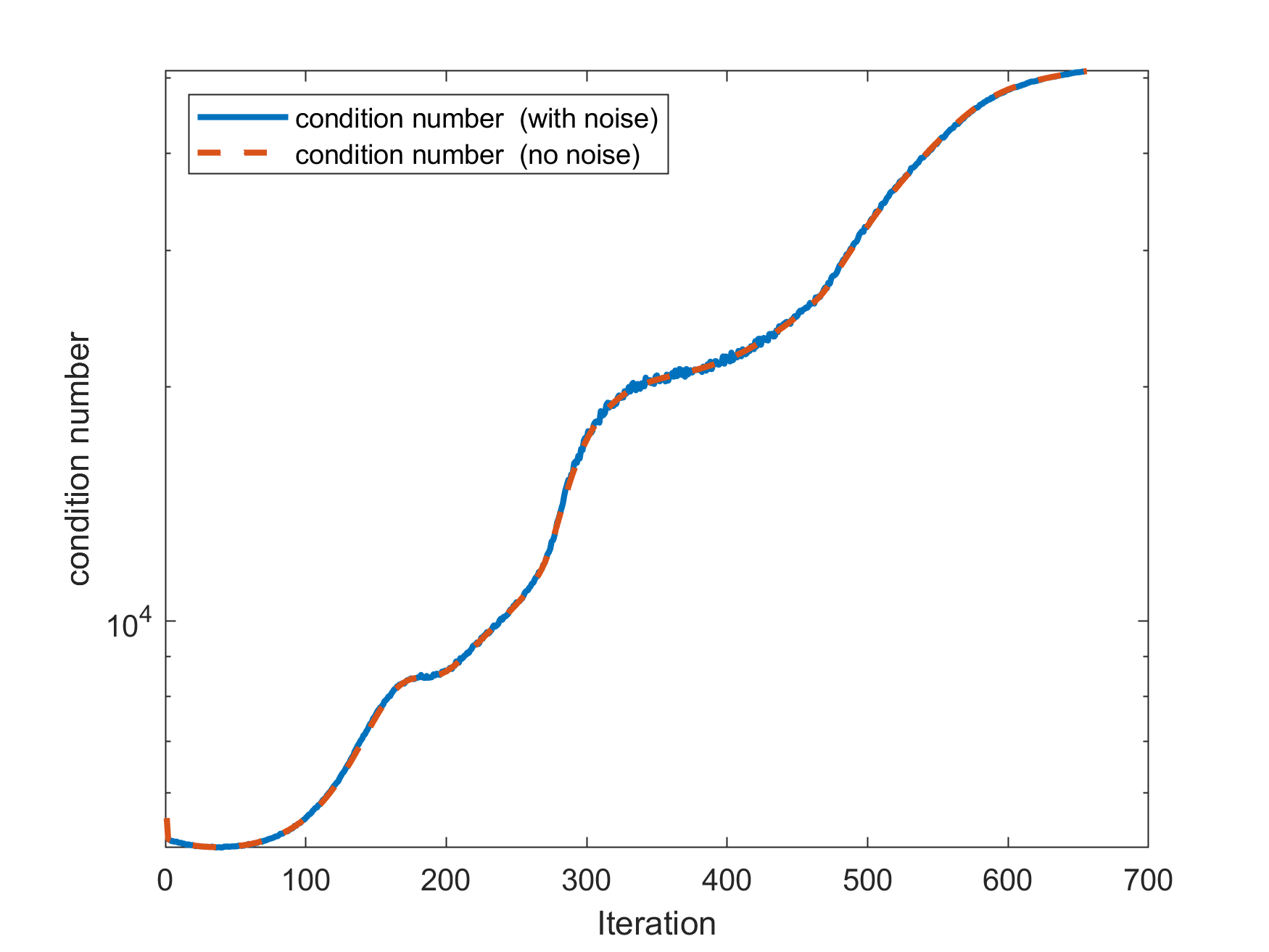}
  \caption{Condition number of Newton matrix vs iteration count for classical and quantum algorithms.}
  \label{fig:kappa}
\end{figure}

Now, having described the dependence of the precision $\delta$ and the condition number $\kappa$ on the duality gap $\nu$ (i.e. the precision parameter $\epsilon$), we investigate their dependence on the size parameter $n$ (the size of the Newton linear system). In order to do that, we sampled random subsets of the cvxportfolio dataset, solved the corresponding instance of problem \eqref{prob:experimental portfolio} using Algorithm \ref{alg:qipm} up to a fixed precision $\epsilon$, and recorded the final (worst) values of $\kappa, \delta$ and $\zeta$. More precisely, the instances of \eqref{prob:experimental portfolio} were constructed by choosing 100 random companies, choosing a random subinterval of $t$ days (such that $t$ is uniform in $[10, 500]$), and setting the precision parameter to $\epsilon = 0.1$. Fixing $\epsilon$ to such a value is justified because the stochasticity of financial markets outweighs by far the accumulated numerical errors. Immediately, we see that $\zeta < 5$ for all instances, and since $\zeta$ is always at least 1, we conclude that it is not very important for the final runtime of Algorithm \ref{alg:qipm}.

\begin{figure}[h]
	\centering
	\begin{subfigure}{\linewidth}
		\includegraphics[width=\linewidth]{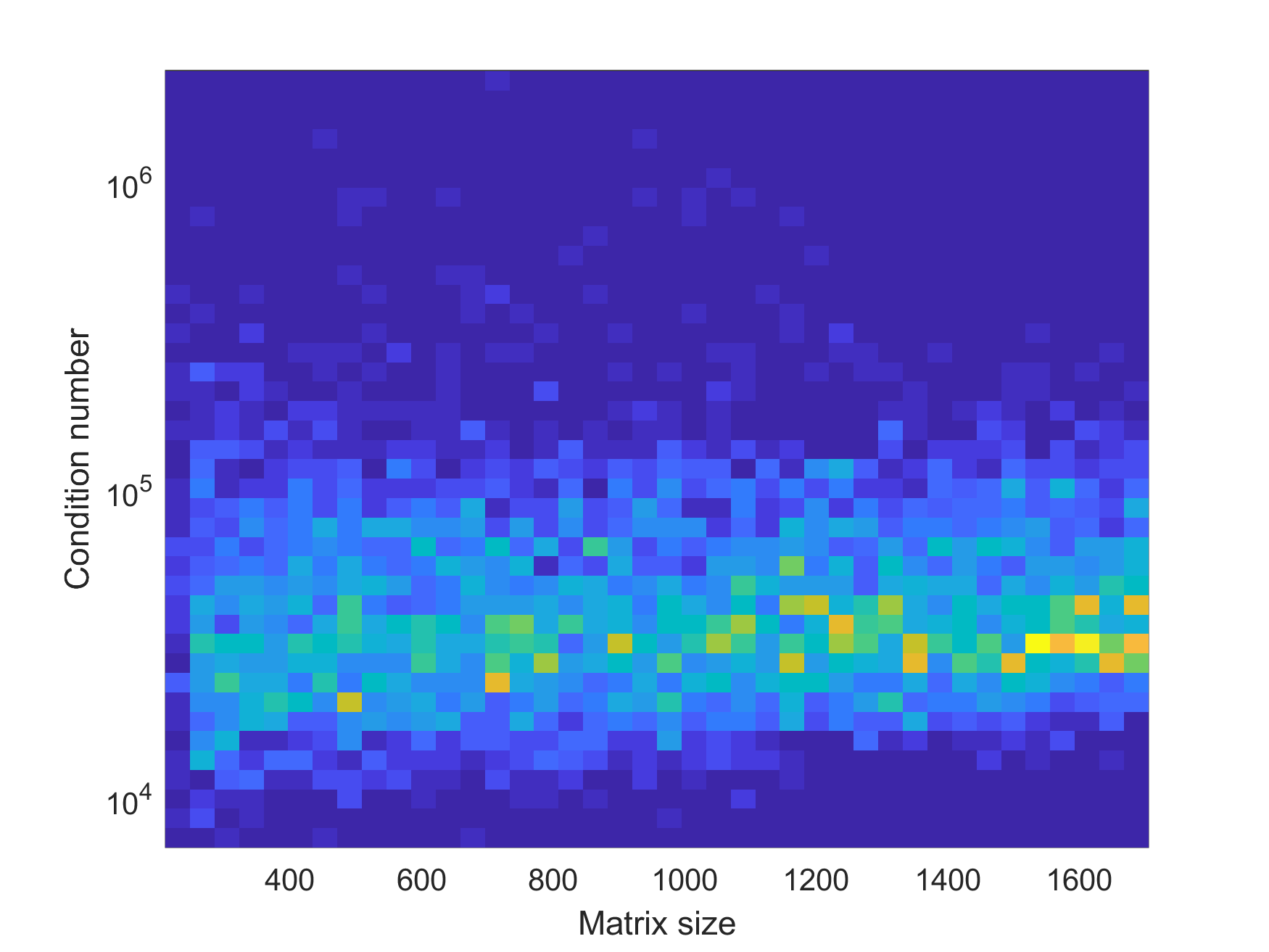}
		\caption{Condition number $\kappa$ for instances of \eqref{prob:experimental portfolio} of different sizes.}
		\label{fig:kappa size evolution}
	\end{subfigure}
	\begin{subfigure}{\linewidth}
		\includegraphics[width=\linewidth]{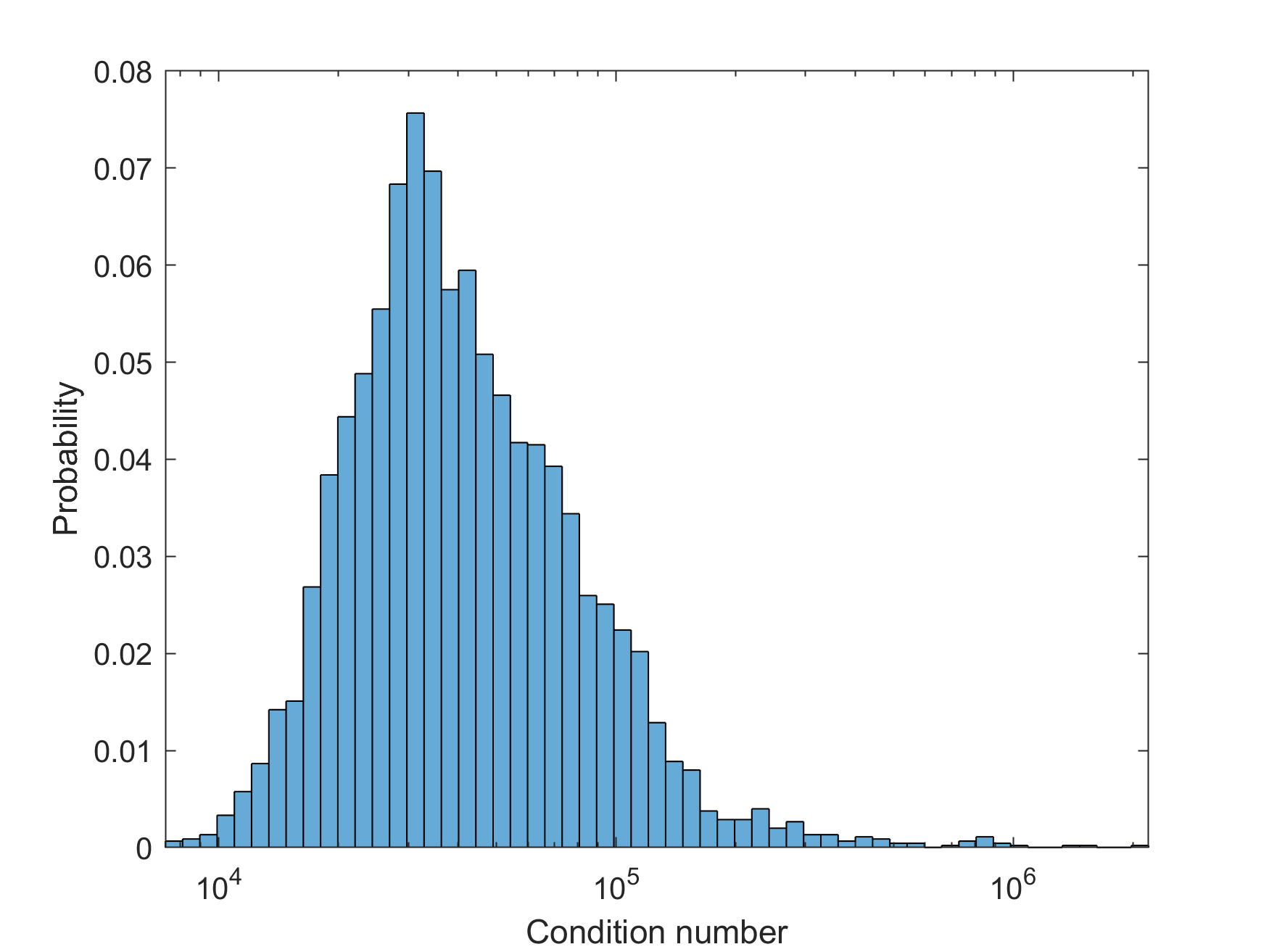}
		\caption{Distribution of $\kappa$ for all instances, of any size.}
		\label{fig:kappa histogram}
	\end{subfigure}
	\caption{Condition number $\kappa$ for different samples of problem \eqref{prob:experimental portfolio}.}
\end{figure}

Figures \ref{fig:kappa size evolution} and \ref{fig:kappa histogram} show the values of the condition number $\kappa$ for different samples of problem \eqref{prob:experimental portfolio}. From Figure \ref{fig:kappa size evolution} we deduce that $\kappa$ does not seem to depend on the problem size, whereas from Figure \ref{fig:kappa histogram} we see that $\kappa$ is very large only in a small number of instances (not necessarily the largest ones). On the other hand, on Figure \ref{fig:delta heatmap} we see that the $1/\delta^2$ term indeed increases with the problem size parameter $n$. By fitting a power law of the form $y = a\cdot x^b$ through these datapoints, we obtain that the exponent $b$ is $1.050$ with a $95\%$ confidence interval of $[0.965, 1.136]$ -- so, $1/\delta^2$ grows roughly linearly with $n$.

\begin{figure}[h]
	\centering 
	\includegraphics[width=\linewidth]{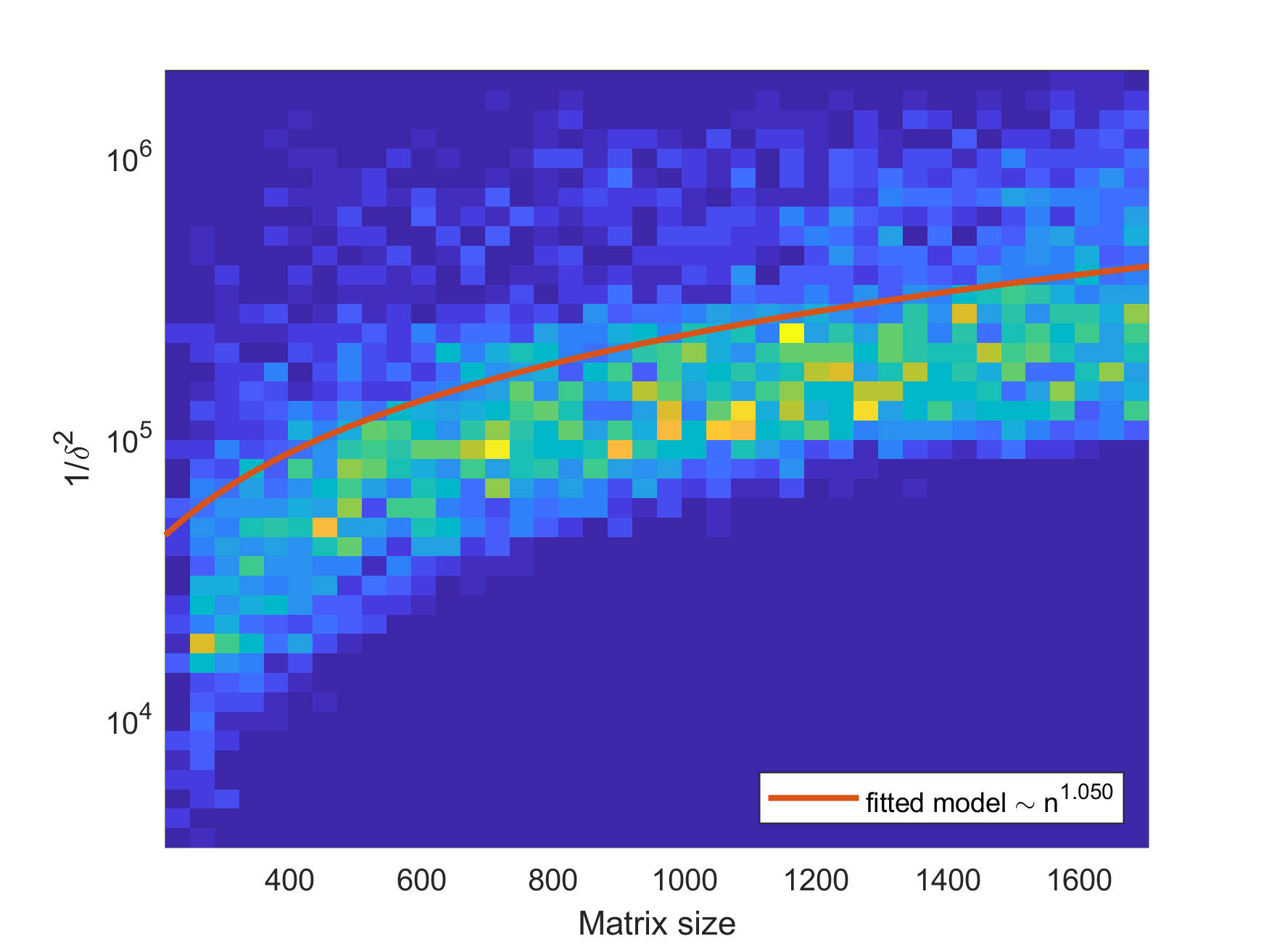}
	\caption{The inverse-square of the tomography precision $\delta$ grows roughly linearly.}
	\label{fig:delta heatmap}
\end{figure}

Finally, we estimate the average-case complexity of Algorithm \ref{alg:qipm} by substituting these observed values of $r, n, \epsilon, \delta, \kappa$ and $\zeta$ into the expression from Theorem \ref{thm:runtime}. Figure \ref{fig:complexity fit} shows how this quantity increases with the problem size $n$, after removing $1\%$ of the biggest outliers. Again, by finding the least-squares fit of a power law through these points, one obtains a dependence of $O(n^{2.387})$, with a $95\%$ confidence bound of $[2.184, 2.589]$. When compared to the classical complexity that scales as $O(n^{\omega + 0.5})$ (which can be thought of as $n^{3.5}$ in practice), we see that for most instances the quantum algorithm achieves a speedup by a factor of almost $O(n)$.

\begin{figure}[h]
	\centering
	\includegraphics[width=\linewidth]{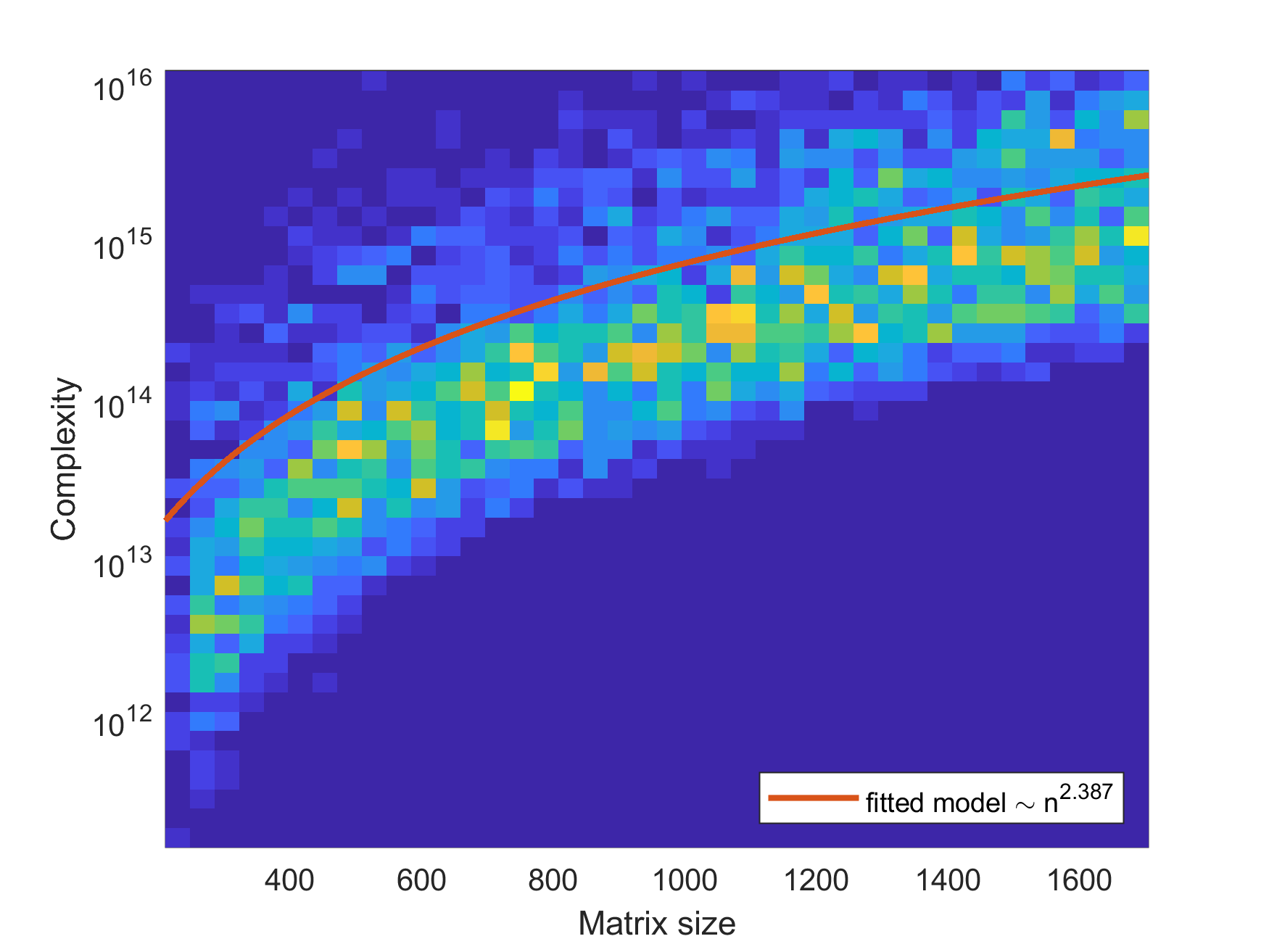}
	\caption{Observed complexities for $\epsilon=0.1$ and the corresponding power-law fit. The $x$-axis is $n$, the size of the Newton system, and the $y$-axis is the observed complexity, as per Theorem \ref{thm:runtime}.}
	\label{fig:complexity fit}
\end{figure}

\section{Concluding remarks}
In this paper we present the first quantum algorithm for the constrained portfolio optimization problem. The algorithm is based on the quantum interior-point method (IPM) framework introduced by \cite{KP18}, and more precisely on the quantum IPM for second-order conic programming (SOCP) presented in \cite{R19}. The problem that is being solved is described by \eqref{prob:portfolio}, and it is more general than the problems considered previously (e.g. by \cite{LR18}), since it allows imposing nonnegativity and budget constraints on the design variables.

The main technical contribution of this paper is Algorithm \ref{alg:qipm}, which solves problem \eqref{prob:portfolio} by reducing it to a SOCP, which can further be solved by using the quantum SOCP IPM from \cite{R19}. Theoretical analysis shows that the running time of this algorithm scales as $\widetilde{O}\left( n^{1.5} \frac{\zeta \kappa}{\delta^2} \log(1/\epsilon) \right)$, which is better than the classical complexity of $\widetilde{O}\left( n^{\omega+0.5}\log(1/\epsilon) \right)$ if the problem-dependent parameters $\kappa$ and $\zeta$ are not ``too large'' and if $\delta$ is not ``too small''. 

To bound these parameters, we simulate the execution of Algorithm \ref{alg:qipm} when solving different instances of problem \eqref{prob:experimental portfolio} to a fixed precision $\epsilon$. These experiments suggest that there is no observable growth in the condition number $\kappa$ as the problem size increases, whereas the most significant impact on the running time of Algorithm \ref{alg:qipm} comes from the factor $1/\delta^2$, which has been observed to grow roughly linearly with the problem size. Finally, using the observed values of these parameters, we estimate the average-case complexity of our algorithm to scale as $O(n^b)$, where the exponent $b$ has a $95\%$ confidence interval of $2.387 \pm 0.202$. Therefore, for most instances, we obtain an almost $O(n)$-speedup over the practical $O(n^{3.5})$ complexity of the classical algorithm.

\textbf{Acknowledgments:} Part of this work was supported by the grants QuantERA QuantAlgo and ANR QuData.

	\bibliographystyle{ACM-Reference-Format}
	\bibliography{bibliography}

\end{document}